% 
%  Equivariant cohomology theories and the pattern map 
% 
% H. Praise Adeyemo 
% Frank Sottile 
%
%%%%%%%%%%%%%%%%%%%%%%%%%%%%%%%%%%%%%%%%%%%%%%%%%%%%%%%%%%%%%%%%%%%%%%%%%% 
\documentclass[12pt]{amsart} 
\usepackage{amssymb} 
\usepackage{graphicx} 
\usepackage{colordvi} 
 
%%%%%%%%%%% Layout %%%%%%%%%%%%%%%%%%%%%%%%%%%%%% 
\headheight=8pt     \topmargin=20pt 
\textheight=630pt   \textwidth=460pt 
\oddsidemargin=5pt \evensidemargin=5pt 
%\oddsidemargin=10pt \evensidemargin=10pt  % Letter 
 
%%%%%%%%%%%% Environments %%%%%%%%%%%%%%%%%%%%%%%%% 
 
\numberwithin{equation}{section} 
\newtheorem{theorem}{Theorem}[section] 
\newtheorem{proposition}[theorem]{Proposition} 
\newtheorem{lemma}[theorem]{Lemma} 
\newtheorem{corollary}[theorem]{Corollary} 
\theoremstyle{remark} 
\newtheorem{example}[theorem]{Example} 
\newtheorem{remark}[theorem]{Remark}

%%%%%%%%%%%%%%%%%%%%%%%%%%%%%%%%%%%%%%%%%%%%%%%%%%%%%%%%%%%%%%%%%%%%%%%% 
\newcounter{FNC}[page] 
\def\fauxfootnote#1{{\addtocounter{FNC}{2}$^\fnsymbol{FNC}$% 
     \let\thefootnote\relax\footnotetext{$^\fnsymbol{FNC}$#1}}}
%%%%%%%%%%%%%%%%%%%% Macros %%%%%%%%%%%%%%%%%%%%%%%%%%%%%%%%%%%%%%%%% 
% 
\newcommand{\defcolor}[1]{\Blue{#1}} 
\newcommand{\demph}[1]{\defcolor{{\sl #1}}}

\newcommand{\pt}{{\it pt}}

\renewcommand{\P}{{\mathbb{P}}} 
\newcommand{\C}{{\mathbb{C}}} 
 
\newcommand{\Q}{{\mathbb{Q}}} 
\newcommand{\Z}{{\mathbb{Z}}} 

\newcommand{\calF}{{\mathcal{F}}} 
\newcommand{\calS}{{\mathcal{S}}} 
\newcommand{\calI}{{\mathcal{I}}} 
\newcommand{\calL}{{\mathcal{L}}} 
\newcommand{\calO}{{\mathcal{O}}} 
 
\newcommand{\frakC}{{\mathfrak{C}}}

\newcommand{\frakS}{{\mathfrak{S}}} 

\DeclareMathOperator{\Maps}{Maps}
\DeclareMathOperator{\diag}{diag}

%%%%%%%%%%%%%%%%%%%%%%%%%%%%%%%%%%%%%%%%%%%%%%%%%%%%%%%% 
\title[Equivariant cohomology theories and the pattern map]{Equivariant cohomology theories\\ and the
  pattern map}  
 
\author{Praise Adeyemo} 
\address{Department of Mathematics\\ 
  University of Ibadan\\ 
   Ibadan, Oyo, Nigeria} 
\email{ph.adeyemo@ui.edu.ng} 
\urladdr{http://sci.ui.edu.ng/HPAdeyemo} 
 
\author{Frank Sottile} 
\address{Department of Mathematics\\ 
         Texas A\&M University\\ 
         College Station\\ 
         Texas\\ 
         USA} 
\email{sottile@math.tamu.edu} 
\urladdr{http://www.math.tamu.edu/\~{}sottile}

\thanks{Adeyemo was partially supported by a grant from the Niels Henrik Abel Board of the
  IMU, and Sottile was partially supported by NSF grant DMS 15-1501370.}
\subjclass{14M15, 14N15, 05E05} 
\keywords{Flag manifold, equivariant cohomology, GKM theory, permutation pattern} 
 
%%%%%%%%%%%%%%%%%%%%%%%%%%%%%%%%%%%%%%%%%%%%%%%%%%%%%%%%%%%%%%%%%%%%%%%%%%%%%%%%% 
%05E05  Symmetric functions 
%14M15  Grassmannians, Schubert varieties, flag manifolds, 
%14N15  Classical problems, Schubert calculus 
%   ???? Equivariant coholomogy ???? 
%%%%%%%%%%%%%%%%%%%%%%%%%%%%%%%%%%%%%%%%%%%%%%%%%%%%%%%%%%%%%%%%%%%%%%%%%%%%%%%%% 
%\def\publname{\scriptsize \Magenta{Mss.}, \Red{17 December 2015} \Blue{{\tt arXiv:1506.04411}}
%\def\currentvolume{} 
%\def\currentissue{} 
% for some reason it can't get the date info from here 
%\issueinfo{X}{3}{January}{2003} 
%\pagespan{1}{60} 
%\PII{}} 
%\copyrightinfo{}{} 
%%%%%%%%%%%%%%%%%%%%%%%%%%%%%%%%%%%%%%%%%%%%%%%%%%%%%%%%%%%%%%%%%%%%%%%%%%%%%%%%% 
 
\begin{document} 
 
\begin{abstract} 
 Billey and Braden defined a geometric pattern map on flag manifolds which extends the 
 generalized pattern map of Billey and Postnikov on Weyl groups. 
 The interaction of this torus equivariant map with the Bruhat order and its action on line bundles lead to 
 formulas for its pullback on the equivariant cohomology ring and on equivariant $K$-theory. 
 These formulas are in terms of the Borel presentation, the basis of Schubert classes, and localization at torus
 fixed points.
\end{abstract} 
 
\maketitle 
%%%%%%%%%%%%%%%%%%%%%%%%%%%%%%%%%%%%%%%%%%%%%%%%%%%%%%%%%%%%%%%%%%%%%%%%%%%%%%%%% 
% 
\section*{Introduction} 
 
The (geometric) pattern map of Billey and Braden~\cite{BB} is a map between flag manifolds that extends the
generalized pattern map on Weyl groups of Billey and Postnikov~\cite{BiPo}.
We previously studied maps on cohomology and $K$-theory induced by sections of the pattern map~\cite{AS},
generalizing formulas for specializations of Schubert and Grothendieck polynomials that had been obtained in type
A~\cite{BS98,LRS}, which are generalizations of the decomposition formula~\cite{BKTY1,BKTY2}.

The pattern map is torus-equivariant; we extend the results of~\cite{AS} to (torus) equivariant
cohomology and equivariant $K$-theory.
Specifically, we give formulas for the pattern map on equivariant cohomology and equivariant $K$-theory in
terms of localization, the Borel presentation, and Schubert classes.
The localization formula is simply restriction, while the formula for the Borel construction involves the
action of a minimal right coset representative.
Most interesting is the formula for the pullback of a Schubert class.
This is a positive (in the sense of Graham~\cite{Gr} and of Anderson, Griffeth, and Miller~\cite{AGM})
sum of Schubert classes, with coefficients certain explicit Schubert structure constants.
The value of these formulas is the interplay between them, which we illustrate through
examples.
The pattern map, together with Pieri-type formulas~\cite{So96,LS07}, was used in~\cite{BS_Skew,LRS} to obtain
new formulas for (non-equivariant) Schubert classes in type $A$ cohomology and $K$-theory.
The results of this paper should give similar formulas for equivariant Schubert classes, given a suitable
Pieri-type formula in type $A$.

In Section~\ref{S:flag}, we provide some background on equivariant cohomology and $K$-theory of flag manifolds and
explain the localization presentation of equivariant cohomology and $K$-theory.
In Section~\ref{S:pattern} we describe the pattern map and develop its interaction with localization.
We use this to prove our main results for equivariant cohomology in Section~\ref{S:equivariant}
and for equivariant $K$-theory in Section~\ref{S:K-theory}.
Section~\ref{S:Ex} contains two examples.

%%%%%%%%%%%%%%%%%%%%%%%%%%%%%%%%%%%%%%%%%%%%%%%%%%%%%%%%%%%%%%%%%%%%%%%%%%%%%%%%% 
% 
\section{Equivariant cohomology and $K$-theory of flag manifolds}\label{S:flag} 

We state standard results on equivariant cohomology and $K$-theory for flag varieties.

%%%%%%%%%%%%%%%%%%%%%%%%%%%%%%%%%%%%%%%%%%%%%%%%%%%%%%%%%%%%%%%%%%%%%%%%%%%%%%%%% 
\subsection{Equivariant cohomology and homology}
Let $Y$ be a \demph{$T$-variety}---a projective variety equipped with a left action of a torus $\defcolor{T}\simeq
(\C^\times)^n$. 
We define $T$-equivariant homology and $T$-equivariant cohomology of $Y$ via the Borel construction.
Let $T\hookrightarrow ET\twoheadrightarrow BT$ be the universal $T$-bundle where $T$ acts freely on the right of
a contractible space $ET$ with quotient the classifying space $BT$ of $T$.
The $T$-equivariant cohomology of $Y$ is
\[
     \defcolor{H^*_T(Y)}\ :=\ H^*(ET\times_T Y, \Q)\,.
\]
We use $T$-equivariant Borel-Moore homology \defcolor{$H^T_*(Y)$}, where $ET$ is replaced by
Totaro's~\cite{Tot} sequence of finite approximation; see~\cite{Br00,EG} for details.
While $H^*_T(Y)$ is graded by the positive integers, $H^T_*(Y)$ is $\Z$-graded.
When $Y$ is a point, \defcolor{$\pt$}, $H^*_T(\pt)=H^*(BT,\Q)$, which is 
the symmetric algebra \defcolor{$S$} (over $\Q$) of the character group
$\defcolor{\Xi(T)}:=\mbox{Hom}(T,\C^\times)$ of $T$, where a character has homological degree 2. 
The map $\defcolor{\rho}\colon Y\to\pt$ to a point gives functorial maps 
$\rho^*\colon H^*_T(\pt)=S \to H^*_T(Y)$ and $\rho_*\colon H^T_*(Y) \to H^T_*(\pt)$.

Equivariant cohomology $H^*_T(Y)$ has a natural ring structure and is an $S$-algebra through the map
$\rho^*$. 
The cap product, \defcolor{$\frown$}, realizes $H^T_*(Y)$ a module over $H^*_T(Y)$ with
\[
   H^a_T(Y) \otimes H^T_b(Y)\ \xrightarrow{\ \frown\ }\ H^T_{b-a}(Y)\,,
\]
and thus $H^T_*(Y)$ is also an $S$-module. 

A $T$-invariant subvariety $Z$ of $Y$ has an equivariant fundamental cycle, 
\[
   \defcolor{[Z]^T}\ \in\ H^T_{2\dim(Z)}(Y)\,.
\]
The natural map $\frown[Y]^T\colon H^*_T(Y)\to H^T_{2\dim(Y)-*}(Y)$ is an isomorphism when $Y$ is smooth.
Consequently, we may identify $H^T_{-*}(\pt)$ with $S=H^*_T(\pt)$.
We thus have a pairing
\[
   \langle\;,\;\rangle\ \colon\ H^*_T(Y)\otimes H^T_*(Y)\ \longrightarrow\ S\,,
\]
defined by $\langle y,C\rangle:= \rho_*(y\frown C)$.
This satisfies the projection formula; if $\phi\colon Z\to Y$ is a map of projective varieties and we have 
$y\in H^*_T(Y)$ and $C\in H^T_*(Z)$, then 
 \begin{equation}\label{eq:projection_formula}
   \rho_*(\phi^*(y)\frown C)\ =\ \rho_*(y\frown \phi_*(C))\,.\smallskip
 \end{equation}
%

%%%%%%%%%%%%%%%%%%%%%%%%%%%%%%%%%%%%%%%%%%%%%%%%%%%%%%%%%%%%%%%%%%%%%%%%%%%%%%%%%
\subsection{Flag varieties}

Let \defcolor{$G$} be a connected and simply connected complex semisimple linear algebraic group,  \defcolor{$B$} a
Borel subgroup of $G$, and \defcolor{$T$} the maximal torus contained in $B$.  
The Weyl group $\defcolor{W}:=N(T)/T$ of $G$ is the quotient of the normalizer of $T$ by $T$. 
Our choice of $B$ gives $W$ the structure of a Coxeter group with a preferred set of 
generators and a length function, $\ell\colon W\to\{0,1,2,\dotsc,\}$. 
These conventions will remain in force for the remainder of this paper.
Let $\defcolor{w_o}$ be the longest element in $W$. 

As any Borel subgroup is its own normalizer and all Borel subgroups are conjugate by elements of $G$, we
may identify the set $\defcolor{\calF}$ of Borel subgroups with the orbit $G/B$, called the \demph{flag manifold}. 
This has a left action by elements of $G$, and we write \defcolor{$g.B$} for the group $gBg^{-1}$.
The inclusion $N(T)\hookrightarrow G$ gives an injection of the Weyl group $W\hookrightarrow\calF$ as 
$T=N(T)\cap B$.
Since for any $w\in W$, the Borel group $w.B$ contains $T$, this identifies the Weyl group 
$W$ with the set \defcolor{$\calF^T$} of $T$-fixed points of $\calF$.

Elements of $W$ also index $B$-orbits on $\calF$, which together form the Bruhat decomposition,  
 \begin{equation}\label{Eq:BruhatDecomposition} 
   \calF\ =\ \bigsqcup_{w\in W} Bw.B/B\,. 
 \end{equation} 
The orbit $Bw.B$ is isomorphic to an affine space of dimension $\ell(w)$ and is a \demph{Schubert cell}. 
Its closure is a \demph{Schubert variety}, \defcolor{$X_w$}. 
Set $\defcolor{B_-}:=w_o.B$, which is the Borel subgroup opposite to $B$ containing $T$. 
Let $\defcolor{X^w}:=\overline{B_-w.B}$, which is also a Schubert variety and has codimension $\ell(w)$. 
The intersection $X^v\cap X_w$ is nonempty if and only if $w\geq v$ and in that case it is irreducible of dimension 
$\ell(w)-\ell(v)$~\cite{Deodhar,Richardson}. 
Note that each of $X^v$, $X_w$, and $X^v\cap X_w$ is $T$-stable.
 
%%%%%%%%%%%%%%%%%%%%%%%%%%%%%%%%%%%%%%%%%%%%%%%%%%%%%%%%%%%%%%%%%%%%%%%%%%%%%%%%%

\subsection{Equivariant cohomology of the flag manifold}
We use three presentations for the equivariant cohomology ring $H^*_T(\calF)$ of the flag variety.
Identifying $T$ with $B/[B,B]$, a character $\lambda\in\Xi(T)$ is also a character of $B$.
Write \defcolor{$\C_\lambda$} for the one-dimensional $T$-module $\C$ where $T$ acts via $\lambda$.
The $T$-equivariant line bundle $\defcolor{\calL_\lambda}=G\times_B\,\C_\lambda$ is the
quotient of $G\times\C$ by the equivalence relation $(gb,z)\sim(g,\lambda(b)\cdot z)$ for $g\in G$, $b\in B$, and
$z\in\C$.
Equivariant bundles have equivariant Chern classes.
The map associating a character $\lambda\in\Xi(T)$ to the first equivariant Chern class 
$c_1^T(\calL_\lambda)\in H^2_T(\calF)$ induces a homomorphism of graded algebras, 
$\defcolor{c^T}\colon S\to H^*_T(\calF)$.
The \demph{Borel presentation} of $H^*_T(\calF)$ is the isomorphism
 \[
    S\otimes_{S^W}\! S\ \simeq\ H^*_T(\calF)\,,
\]
which is defined by $f\otimes g\mapsto f\cdot c^T(g)$.
The left copy of $S$ is the pullback $\rho^*(S)$ from a point, and the right copy is the
subring generated by equivariant Chern classes of the $\calL_\lambda$.

%%%%%%%%%%%%%%%%%%%%%%%%%%%%%%%%%%%%%%%%%%%%%%%%%%%%%%%%%%%%%%%%%%%%%%%%%%%%%%%%%

Schubert cells are even-dimensional and so the fundamental cycles of Schubert varieties (\demph{Schubert cycles})
form a basis for $H^T_*(\calF)$ over the ring $S$,
\[
     H^T_*(\calF)\ =\ \bigoplus_{w\in W} S\cdot [X_w]^T\,.
\]
There is a dual basis $\{\defcolor{\frakS_v}\mid v\in W\}$ for $H^*_T(\calF)$ with 
$\frakS_v\in H^{2\ell(v)}_T(\calF)$, defined by
\[
   \rho_*( \frakS_v\frown [X_w]^T )\ =\ \delta_{v,w}\,.
\]
Elements of this dual basis are \demph{Schubert classes}, as they are identified with Schubert cycles 
under the isomorphism between equivariant cohomology and equivariant homology, 
 \begin{equation}\label{Eq:duality}
  \frakS_v\frown[\calF]^T\ =\ [X^v]^T
   \qquad\mbox{and}\qquad
  \frakS_v\frown [X_w]^T\ =\ [X^v\cap X_w]^T\,.
 \end{equation}
They lie in the subring generated by the equivariant Chern classes.

There are equivariant \demph{Schubert structure constants} $\defcolor{c^w_{u,v}}\in S$ (of cohomological degree
$2(\ell(u)+\ell(v)-\ell(w))$) defined by the identity 
\[
  \frakS_u \cdot \frakS_v\ =\ \sum_w c^w_{u,v}\, \frakS_w\,.
\]
Graham~\cite{Gr} showed that these are positive sums of monomials in the simple roots of $G$.
Using the duality with Schubert cycles $[X_w]^T$, we have
 \begin{equation}\label{Eq:triple-Int}
   c^w_{u,v}\ =\ \rho_* (\frakS_u\cdot\frakS_v\frown [X_w]^T)\ =\ 
                 \rho_*(\frakS_u\frown [X^v\cap X_w]^T)\,.\smallskip
 \end{equation}

%%%%%%%%%%%%%%%%%%%%%%%%%%%%%%%%%%%%%%%%%%%%%%%%%%%%%%%%%%%%%%%%%%%%%%%%%%%%%%%%%

The inclusion $i\colon\calF^T\hookrightarrow\calF$ of the $T$-fixed points into $\calF$ induces the
localization map
 \begin{equation}\label{Eq:localization} 
  i^*\ \colon\ H^*_T(\calF)\ \longrightarrow\ H^*_T(\calF^T)\
   \ =\ \bigoplus_{w\in W} S\,. 
\end{equation} 
Chang and Skjelbred~\cite{ChSk74} describe the image of the localization map and consequently the
full equivariant cohomology ring.
Let \defcolor{$\calF_1$} be the \demph{equivariant one-skeleton} of $\calF$: the set of points
whose stabilizer has codimension at most one in $T$.

%%%%%%%%%%%%%%%%%%%%%%%%%%%%%%%%%%%%%%%%%%%%%%%%%%%%%%%%%%%%%%%%%%%%%%%%%%%%%%%%%
\begin{proposition}\label{P:CS}
 Let $j\colon \calF_1\hookrightarrow\calF$ be the inclusion of the equivariant one-skeleton of $\calF$.
 Then the map $i^*\colon H^*_T(\calF)\to H^*_T(\calF^T)$ is an injection and it has the same image as the map 
 $j^*\colon H^*_T(\calF_1)\to H^*_T(\calF^T)$.
\end{proposition}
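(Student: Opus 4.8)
The plan is to deduce this from the general Chang--Skjelbred theorem (or Goresky--Kottwitz--MacPherson localization), whose hypotheses are that $\calF$ is a compact $T$-variety with $H^*_T(\calF)$ a free $S$-module and with only finitely many fixed points and finitely many one-dimensional orbits. First I would verify these hypotheses for the flag manifold: compactness is clear, freeness over $S$ follows from the Schubert basis (dually to the Schubert cycle basis of $H^T_*(\calF)$ displayed above, or equivalently from the Borel presentation $S\otimes_{S^W}S$), and $\calF^T=W$ is finite by the identification of $T$-fixed points with the Weyl group. The finiteness of one-dimensional orbits, i.e.\ that $\calF_1$ is a finite union of $T$-stable curves, is the one geometric input that needs a short argument.

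The main step is therefore to describe $\calF_1$ explicitly. I would argue that each point of $\calF_1$ lies in a unique $T$-invariant curve, that these curves are the closures of the one-dimensional $T$-orbits, and that there are finitely many of them: indeed the $T$-fixed curves in $G/B$ are precisely the curves $\overline{T{\cdot}u_\alpha(\C)v.B}\cong\P^1$ joining two fixed points $w.B$ and $s_\alpha w.B$ for $w\in W$ and $\alpha$ a positive root (here $u_\alpha$ denotes the root subgroup). This is a standard fact coming from the Bruhat decomposition: a one-dimensional $B$-stable cell closure that is $T$-stable must be such a $\P^1$. Since $W$ and the root system are finite, $\calF_1$ is a finite union of $\P^1$'s glued at fixed points, so $H^*_T(\calF_1)$ is computable and the Chang--Skjelbred exact sequence applies. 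Granting the general Chang--Skjelbred result, injectivity of $i^*$ is exactly the freeness statement, and the coincidence of the images of $i^*$ and $j^*$ is the content of their theorem.

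Alternatively, and perhaps cleaner for a short proof, I would phrase the argument purely via the Atiyah--Bott / Borel localization theorem: because $H^*_T(\calF)$ is a free, hence torsion-free, $S$-module, the restriction $i^*$ to the fixed locus is injective after inverting finitely many characters, and in fact (since $\calF$ has no odd cohomology) injective already over $S$. Then one runs the Chang--Skjelbred lemma: the image of $i^*$ equals the kernel of the boundary map $H^*_T(\calF^T)\to H^*_{T+1}(\calF_1,\calF^T)$ in the long exact sequence of the pair, and that kernel is visibly the image of $j^*\colon H^*_T(\calF_1)\to H^*_T(\calF^T)$. I would cite \cite{ChSk74} (and \cite{Br00,EG} for the equivariant formalism already set up above) for the precise statement and only spell out the verification of the hypotheses for $\calF$.

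I expect the only real obstacle to be making the claim ``$\calF$ has finitely many one-dimensional $T$-orbits'' precise and self-contained; everything else is either standard equivariant-cohomology formalism or a direct consequence of the Schubert-basis and Borel presentations recorded earlier. Since this fact is well known and the section is explicitly billed as recalling standard results, I would keep that verification brief, pointing to the description of $\calF_1$ as the union of the $T$-stable $\P^1$'s indexed by pairs $(w,\alpha)\in W\times\Phi^+$, and then invoke \cite{ChSk74} to conclude.
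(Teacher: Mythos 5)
The paper presents this proposition as a known result, citing Chang--Skjelbred, and supplies no proof; so there is no argument in the text to compare against. Your plan---invoke the general Chang--Skjelbred theorem and check its hypotheses for $\calF$ (compactness, $H^*_T(\calF)$ free over $S$ via the Schubert basis or the Borel presentation, $\calF^T=W$ finite, finitely many one-dimensional $T$-orbits)---is exactly the intended reading of the citation, and all of those checks are correct.

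One detail in your verification of the finiteness of the one-dimensional orbit stratum is not quite right, though the conclusion is. You argue that a $T$-invariant curve ``must be a one-dimensional $B$-stable cell closure.'' That is false: the $T$-curve connecting $w.B$ to $s_\alpha w.B$ is generally not $B$-stable when $w\neq e$ (it is a Schubert curve only for $w=e$, and otherwise is the closure of a root-subgroup orbit for a different Borel containing $T$). The standard argument instead examines the tangent space at a fixed point: $T_{w.B}\calF$ decomposes into one-dimensional $T$-weight lines indexed by roots, so any $T$-invariant curve through $w.B$ has a $T$-invariant tangent line, hence is the closure of the orbit of the corresponding root subgroup through $w.B$. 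This gives exactly $|\Phi^+|$ curves through each of the $|W|$ fixed points (each curve counted at its two endpoints), hence finitely many. With that correction your outline is complete; everything else is correct and matches the paper's implicit reasoning.
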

%%%%%%%%%%%%%%%%%%%%%%%%%%%%%%%%%%%%%%%%%%%%%%%%%%%%%%%%%%%%%%%%%%%%%%%%%%%%%%%%%

The flag manifold $\calF$ has finitely many one-dimensional $T$-orbits.
For such a space, Goresky, Kottwitz, and MacPherson~\cite{GKM} used the result of Chang and Skjelbred to give an
elegant description of the image of $\jmath^*$ and thus a description of $H^*_T(\calF)$.

Identify $H^*_T(\calF^T)=\bigoplus_{w\in W} S$ with the set $\Maps(W,S)$ of functions $\phi\colon W\to S$:
If $y\in H^*_T(\calF)$ and $\phi=i^*(y)$ then $\phi(w)$ is defined to be $i^*_w(y)\in S$.
Here, $i_w$ is the map $i_w\colon\pt\to w.B\in\calF^T$.
Classes $\phi\in\Maps(W,S)$ lying in the image of $H^*_T(\calF)$ under $i^*$ satisfy the 
\demph{GKM relations}:
For a root $\alpha$ of $G$ and $u\in W$, we have
 \[
    \phi(u)\ -\ \phi(s_\alpha u)\ \in\ \langle\alpha\rangle\,,
 \]
where $s_\alpha\in W$ is the reflection corresponding to $\alpha$ and $\langle\alpha\rangle$ is the principal ideal
of $S$ generated by $\alpha$  (as roots of $G$ are characters of $T$).

At each $T$-fixed point, there is a two-to-one correspondence between roots $\alpha$ of $G$ and $T$-invariant
curves ($\alpha$ and $-\alpha$ correspond to the same curve whose stabilizer is annihilated by $\alpha$).
Figure~\ref{F:One_Skeleta} displays the equivariant one-skeleta of $Sp(4,\C)/B$ and
$SL(4,\C)/B$, which have Lie types $C_2$ and $A_3$, respectively. 
%%%%%%%%%%%%%%%%%%%%%%%%%%%%%%%%%%%%%%%%%%%%%%%%%%%%%%%%%%%%%%%%%%%%%%%%%%%%%%%%%
\begin{figure}[htb]
   \raisebox{25pt}{\includegraphics[height=100pt]{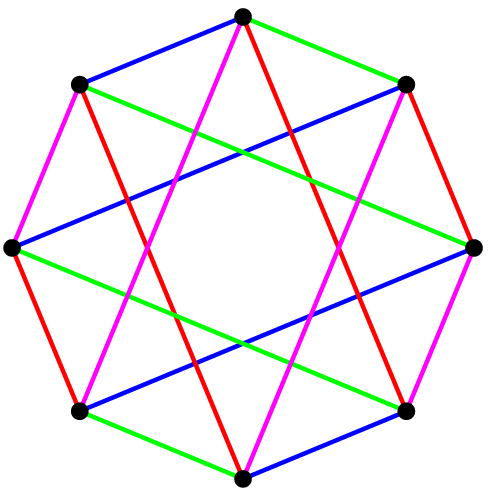}}\qquad
   \includegraphics[height=150pt]{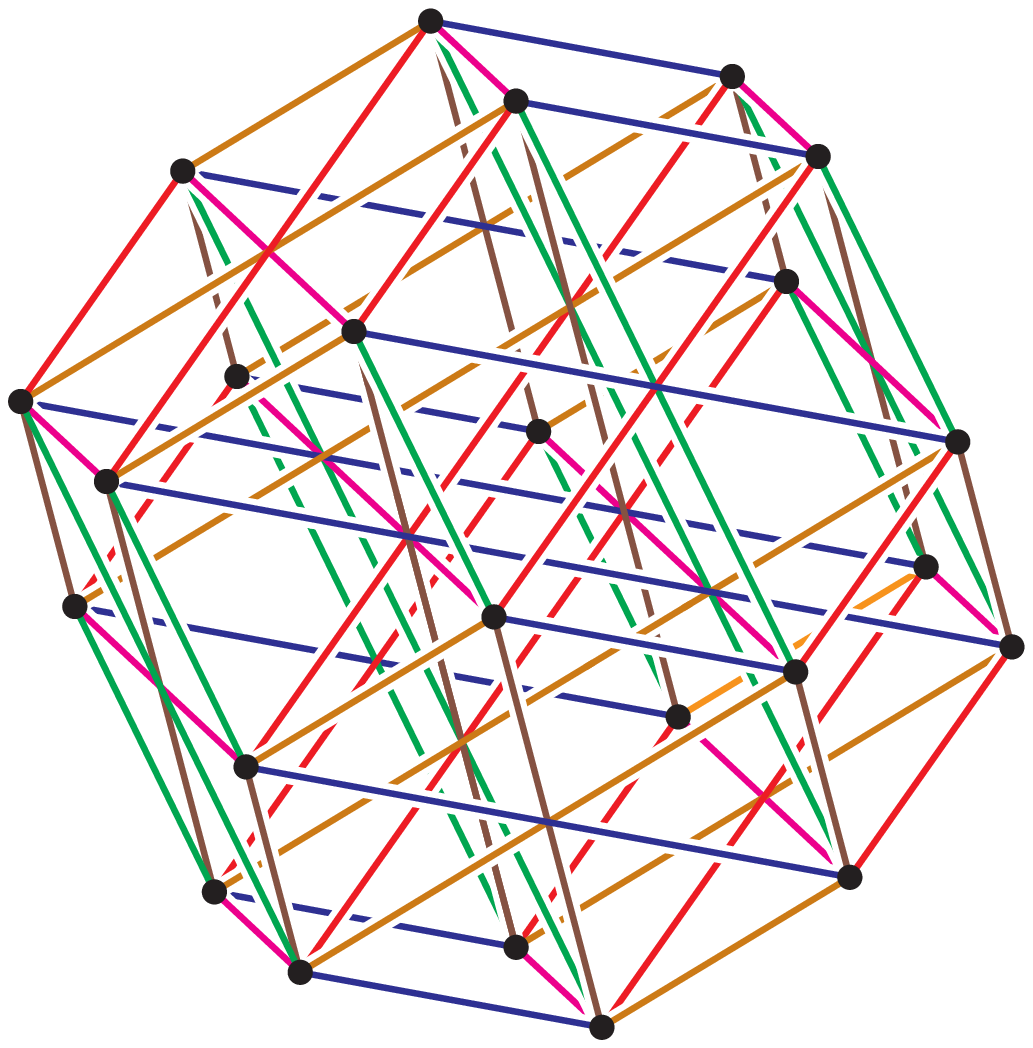}

 \caption{Equivariant one-skeleta.}
 \label{F:One_Skeleta}
\end{figure}
%%%%%%%%%%%%%%%%%%%%%%%%%%%%%%%%%%%%%%%%%%%%%%%%%%%%%%%%%%%%%%%%%%%%%%%%%%%%%%%%%

Let us describe the map $i^*\colon H_T^*(\calF)\to H_T^*(\calF^T)$ in more detail.
The inclusion of a $T$-fixed point $w.B$ is a map
\[
   i_w\ \colon\ \pt \ \longrightarrow\ w.B\ \in\ \calF\,.
\]
The Weyl group $W$ acts on characters $\Xi(T)$ of $T$ on the right via
 \begin{equation}\label{Eq:rightAction}
    w.\lambda(t)\ :=\ \lambda(\sigma^{-1} t \sigma)\,,
 \end{equation}
where $\sigma\in N(T)$ is any representative of the right coset $w\subset N(T)$, $T\sigma=w$.
The action~\eqref{Eq:rightAction} on $\Xi(T)$ induces a right action of $W$ on $S$.
The following is standard; we include a proof to illustrate our conventions.

%%%%%%%%%%%%%%%%%%%%%%%%%%%%%%%%%%%%%%%%%%%%%%%%%%%%%%%%%%%%%%%%%%%%%%%%%%%%%%%%%
\begin{lemma}\label{L:LB_fixedPoint}
 Let $\lambda\in\Xi(T)$.
 Then $i^*_w(\calL_\lambda)= \C_{w.\lambda}$.
\end{lemma}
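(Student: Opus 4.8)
The plan is to compute the fiber of $\calL_\lambda$ over the fixed point $w.B$ together with its residual $T$-action, since the pullback $i^*_w(\calL_\lambda)$ is exactly that fiber regarded as a one-dimensional $T$-module. Recall that $\calL_\lambda=G\times_B\C_\lambda$ is the quotient of $G\times\C$ by the relation $(gb,z)\sim(g,\lambda(b)z)$, and that $T$ acts on the left by $t\cdot[(g,z)]:=[(tg,z)]$, the action descended from left multiplication on $G$.

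First I would fix a representative $\sigma\in N(T)$ of the coset $w$, so that the fixed point $w.B$ is the point $\sigma B\in G/B$ and the fiber of $\calL_\lambda$ there is $\{[(\sigma,z)]\mid z\in\C\}$, which is visibly one-dimensional. Next I would trace through the $T$-action on this fiber: for $t\in T$ we have $t\cdot[(\sigma,z)]=[(t\sigma,z)]$, and since $\sigma$ normalizes $T$ we may write $t\sigma=\sigma\cdot(\sigma^{-1}t\sigma)$ with $\sigma^{-1}t\sigma\in T\subset B$. Applying the equivalence relation then gives $[(t\sigma,z)]=[(\sigma,\lambda(\sigma^{-1}t\sigma)\,z)]$, so $t$ acts on the fiber by the scalar $\lambda(\sigma^{-1}t\sigma)$. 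By the definition~\eqref{Eq:rightAction} of the right action of $W$ on $\Xi(T)$, this scalar is precisely $(w.\lambda)(t)$, so the fiber is $\C_{w.\lambda}$ as a $T$-module, which is the assertion.

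I expect no substantive obstacle here; the only point requiring care is bookkeeping with the conventions—confirming that the $T$-action on $\calL_\lambda$ used above is the one induced by left multiplication on $G$, that the representative $\sigma$ may be chosen freely since $\sigma^{-1}t\sigma$ is independent of the choice modulo $T$ acting trivially by conjugation on itself (so $w.\lambda$ is well defined), and that the direction of the action in~\eqref{Eq:rightAction} matches the computation. Once these match up, the identification $i^*_w(\calL_\lambda)=\C_{w.\lambda}$ is immediate.
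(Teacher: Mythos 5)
Your argument is correct and is essentially the same as the paper's proof: you identify the fiber of $\calL_\lambda$ over $w.B$ with classes $[(\sigma,z)]$ for a representative $\sigma\in N(T)$ of $w$, and compute $t\cdot[(\sigma,z)]=[(\sigma,\lambda(\sigma^{-1}t\sigma)z)]$, so the character is $w.\lambda$ as in~\eqref{Eq:rightAction}. The only difference is your added (welcome) remarks on well-definedness and conventions, which the paper leaves implicit.
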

%%%%%%%%%%%%%%%%%%%%%%%%%%%%%%%%%%%%%%%%%%%%%%%%%%%%%%%%%%%%%%%%%%%%%%%%%%%%%%%%%

%%%%%%%%%%%%%%%%%%%%%%%%%%%%%%%%%%%%%%%%%%%%%%%%%%%%%%%%%%%%%%%%%%%%%%%%%%%%%%%%%
\begin{proof}
 The pullback $i^*_w(\calL_\lambda)$ of $\calL_\lambda$ along $i_w$ is $w.B\times_B\C_\lambda$.
 We determine the character of the action of $T$ on 
%$\iota_\varsigma^*(\calL_\lambda)$.
 $i_w^*(\calL_\lambda)$.
 Let $\sigma\in N(T)$ be a representative of $w$ so that $w=T\sigma$.
 Points of $w.B\times_B\C_\lambda$ have unique representatives $(\sigma,z)$ for $z\in \C$.
 Let $t\in T$.
 Then
\[
     (t\sigma,z)\ =\ (\sigma \sigma^{-1}t\sigma, z)\ =\ 
     (\sigma, \lambda(\sigma^{-1}t\sigma)\cdot z)\,,
\]
 which shows that $w.B\times_B\C_\lambda=\C_{w.\lambda}$.
\end{proof}
%%%%%%%%%%%%%%%%%%%%%%%%%%%%%%%%%%%%%%%%%%%%%%%%%%%%%%%%%%%%%%%%%%%%%%%%%%%%%%%%%

%%%%%%%%%%%%%%%%%%%%%%%%%%%%%%%%%%%%%%%%%%%%%%%%%%%%%%%%%%%%%%%%%%%%%%%%%%%%%%%%%
\begin{lemma}
 Let $f\otimes g\in S\otimes_{S^W}\! S\simeq H_T^*(\calF)$ and $w\in W$.
 Then
\[
    i_w^*(f\otimes g)\ =\ f\cdot (w.g)\,.
\]
\end{lemma}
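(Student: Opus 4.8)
The plan is to reduce the statement to the two facts already established: the definition of the Borel presentation (the isomorphism $S\otimes_{S^W}S\simeq H^*_T(\calF)$ sending $f\otimes g$ to $f\cdot c^T(g)$) and Lemma~\ref{L:LB_fixedPoint} (that $i^*_w(\calL_\lambda)=\C_{w.\lambda}$). First I would note that both sides of the claimed identity are bi-additive and multiplicative in the appropriate sense, so it suffices to check it on algebra generators: the left copy of $S$, coming from $\rho^*$, and elements of the form $c^T(\lambda)$ for $\lambda\in\Xi(T)$, which generate the right copy.

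For the left factor, the map $i_w$ factors the structure map $\rho\colon\calF\to\pt$ through $\rho\colon\pt\to\pt$ composed with $i_w$; more precisely $\rho\circ i_w=\id_{\pt}$ on the point, so $i_w^*\circ\rho^*=\id$ on $S=H^*_T(\pt)$. Hence for the class $f\otimes 1=\rho^*(f)$ we get $i_w^*(f\otimes 1)=f$, matching $f\cdot(w.1)=f$ since $W$ fixes $1\in S$. For the right factor, I would use that equivariant Chern classes are natural under pullback: $i_w^*\bigl(c_1^T(\calL_\lambda)\bigr)=c_1^T\bigl(i_w^*\calL_\lambda\bigr)$. By Lemma~\ref{L:LB_fixedPoint}, $i_w^*\calL_\lambda=\C_{w.\lambda}$, and the first equivariant Chern class of the $T$-module $\C_\mu$ over a point is the character $\mu\in S=H^2_T(\pt)$ itself. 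Therefore $i_w^*\bigl(c^T(\lambda)\bigr)=w.\lambda=w.(c^T(\lambda))$ under the identification, which is exactly the claimed formula on generators of the second type.

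Finally I would assemble these: $i_w^*$ is a ring homomorphism and an $S$-algebra homomorphism (via $\rho^*$), so it is determined by its values on $1\otimes\Xi(T)$ together with $S$-linearity, and the right-hand side $f\otimes g\mapsto f\cdot(w.g)$ is likewise determined because $g\mapsto w.g$ is the ring automorphism of $S$ extending the right $W$-action on characters. Since the two maps agree on $\rho^*(S)$ and on each $c^T(\lambda)$, and both are $\Q$-algebra homomorphisms out of $H^*_T(\calF)$ that are $S$-linear through $\rho^*$, they coincide; equivalently, evaluating $i_w^*$ on $f\cdot c^T(g)$ gives $f\cdot c^T(w.g)$, which is $f\cdot(w.g)$ in $S$.

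The only real subtlety—and the step I would state carefully rather than wave at—is the compatibility of the right $W$-action~\eqref{Eq:rightAction} on $\Xi(T)$ with the identification $\C_\mu\leftrightarrow\mu$ and with Lemma~\ref{L:LB_fixedPoint}: one must check that the character $w.\lambda$ produced by the lemma is the same $w.\lambda$ appearing in the formula $f\cdot(w.g)$, i.e., that there is no inversion or side-swap hidden in the conventions. Since the lemma was proved precisely with these conventions (its computation yields $\lambda(\sigma^{-1}t\sigma)=w.\lambda(t)$), this is immediate, but it is where a sign/convention error would hide, so it deserves an explicit sentence. Everything else is a routine check that a ring map is determined on generators.
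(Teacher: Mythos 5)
Your argument is correct and is essentially the paper's proof, just spelled out in more detail: the paper likewise uses $S$-linearity of $i_w^*$ to handle the left factor $f$ and then invokes Lemma~\ref{L:LB_fixedPoint} (with naturality of equivariant Chern classes) on the subring generated by the classes $c^T(\lambda)$. Your extra care about the convention for the right $W$-action is a reasonable addition but does not change the route.
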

%%%%%%%%%%%%%%%%%%%%%%%%%%%%%%%%%%%%%%%%%%%%%%%%%%%%%%%%%%%%%%%%%%%%%%%%%%%%%%%%%

%%%%%%%%%%%%%%%%%%%%%%%%%%%%%%%%%%%%%%%%%%%%%%%%%%%%%%%%%%%%%%%%%%%%%%%%%%%%%%%%%
\begin{proof}
 As $i^*_w$ is an $S$-module map, 
 $i_w^*(f\otimes g)=f\cdot i_w^*(g)$.
 Since $g$ lies in the subring of $H^*_T(\calF)$ generated by equivariant Chern classes of the 
 $\calL_\lambda$, the result follows by Lemma~\ref{L:LB_fixedPoint}.
\end{proof}
%%%%%%%%%%%%%%%%%%%%%%%%%%%%%%%%%%%%%%%%%%%%%%%%%%%%%%%%%%%%%%%%%%%%%%%%%%%%%%%%%

%%%%%%%%%%%%%%%%%%%%%%%%%%%%%%%%%%%%%%%%%%%%%%%%%%%%%%%%%%%%%%%%%%%%%%%%%%%%%%%%%
\subsection{Equivariant $K$-theory}
For a projective $T$-variety, let \defcolor{$K^0_T(Y)$} be the Grothendieck ring of $T$-equivariant vector bundles
on $Y$. 
Pullback of vector bundles along a map of $T$-varieties $\phi\colon Y\to Z$ induces the map 
$\phi^*\colon K^0_T(Z)\to K^0_T(Y)$.
The representation ring \defcolor{$R(T)$} of $T$ is $K^0_T(\pt)$, which is the group algebra $\Z[\Xi(T)]$.
The pullback along the map $\rho\colon Y\to \pt$ induces on $K^0_T(Y)$ the structure of
an $R(T)$-algebra.

The Grothendieck group \defcolor{$K^T_0(Y)$} of $T$-equivariant sheaves on $Y$ is a module over $K^0_T(Y)$ via
tensor product.
The alternating sum of higher direct images (derived pushforward) gives a functorial map 
$\phi_*\colon K^T_0(Y)\to K^T_0(Z)$ for any map $\phi\colon Y\to Z$ of projective $T$-varieties.
When $Y$ is smooth, the natural map $K^0_T(Y)\to K^T_0(Y)$ is an isomorphism, and we have a pairing 
\[
    \langle\cdot,\cdot\rangle\ \colon\  K^0_T(Y)\otimes_{R(T)}\! K^0_T(Y)\ \longrightarrow\ R(T)
\]
defined by $\langle \xi,\zeta\rangle:= \rho_*(\xi\cdot\zeta)$.
For any $T$-equivariant sheaf/vector bundle $E$ on $Y$, write \defcolor{$[E]$} for its class in the appropriate
Grothendieck group. 

%%%%%%%%%%%%%%%%%%%%%%%%%%%%%%%%%%%%%%%%%%%%%%%%%%%%%%%%%%%%%%%%%%%%%%%%%%%%%%%%%

Consider this for the flag manifold $\calF$.
The ring $K^0_Y(\calF)$ admits a second map $\gamma$ from $R(T)$ induced by the map $\Xi(T)\ni\lambda\mapsto
[\calL_\lambda]$. 
The analog of the Borel presentation for equivariant cohomology is due to McLeod~\cite{McL}.

%%%%%%%%%%%%%%%%%%%%%%%%%%%%%%%%%%%%%%%%%%%%%%%%%%%%%%%%%%%%%%%%%%%%%%%%%%%%%%%%%
\begin{proposition}
 The map $f\otimes g\mapsto f\cdot\gamma(g)$ induces an isomorphism
% of R(T)-modules
\[
    R(T)\otimes_{R(T)^W}\!R(T)\ \longrightarrow\ K^0_T(\calF)\,.
\]
\end{proposition}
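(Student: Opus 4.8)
The plan is to mimic the proof of the Borel presentation in equivariant cohomology, transporting each ingredient to $K$-theory. First I would recall the non-equivariant case: Grothendieck's description $K^0(\calF)\simeq \Z\otimes_{R(T)^W}R(T)$, where the line bundles $\calL_\lambda$ generate $K^0(\calF)$ and the relations are exactly those coming from $R(T)^W$ acting trivially (this is the $K$-theoretic analogue of the Borel presentation, essentially ``$K^0(G/B)$ is the coinvariant algebra''). The $T$-equivariant statement should then follow by a base-change/spectral-sequence argument: since $\calF$ has a $B$-stable cell decomposition into affine spaces (the Bruhat cells~\eqref{Eq:BruhatDecomposition}), $K^0_T(\calF)$ is a free $R(T)$-module on the classes of the structure sheaves of Schubert varieties, and in particular it is flat over $R(T)=K^0_T(\pt)$. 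Flatness together with the non-equivariant computation upgrades the isomorphism from $\Z\otimes_{R(T)^W}R(T)$ to $R(T)\otimes_{R(T)^W}R(T)$.

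More concretely, here is the sequence of steps. Step one: show the map $f\otimes g\mapsto f\cdot\gamma(g)$ is well-defined, i.e. that $\gamma$ kills the augmentation ideal of $R(T)^W$ after the appropriate twist — equivalently, that $f\cdot\gamma(g)=f\gamma(1)\cdot g'$ whenever $g\in R(T)^W$, which reduces to the statement $\gamma(h)=\rho^*(h)$ for $h\in R(T)^W$. This is the $K$-theoretic incarnation of the fact that $W$-invariant combinations of Chern roots/line bundles are pulled back from the point, and it follows because for $h\in R(T)^W$ the class $\gamma(h)\in K^0_T(\calF)$ restricts at every $T$-fixed point $w.B$ to $w.h = h$ (using the $K$-theory analogue of Lemma~\ref{L:LB_fixedPoint}, namely $i_w^*[\calL_\lambda]=[\C_{w.\lambda}]$), so $\gamma(h)$ and $\rho^*(h)$ have equal localizations and hence are equal by injectivity of localization (the GKM/Kostant–Kumar description of $K^0_T(\calF)$, valid since $K^0_T(\calF)$ is free over $R(T)$). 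Step two: surjectivity. The Schubert basis $\{[\calO_{X_w}]\}$ spans $K^0_T(\calF)$ over $R(T)$, and one shows each $[\calO_{X_w}]$ lies in the $R(T)$-subalgebra generated by the $\gamma([\calL_\lambda])$; in fact it suffices to know the $\gamma([\calL_\lambda])$ together with $R(T)$ generate, which one can get from the non-equivariant surjectivity by Nakayama over the local ring $R(T)$ completed at the augmentation ideal (or by a filtration argument comparing associated graded pieces with equivariant cohomology). Step three: injectivity. Both sides are free $R(T)$-modules of the same rank $|W|$ — the left because $S\otimes_{S^W}S$-type arguments show $R(T)\otimes_{R(T)^W}R(T)$ is free of rank $|W|$ over $R(T)$ (since $R(T)$ is free of rank $|W|$ over $R(T)^W$, a classical fact for Weyl groups acting on the weight lattice), the right because of the Schubert cell decomposition — so a surjection between them whose reduction mod the augmentation ideal is the non-equivariant isomorphism (Grothendieck) must itself be an isomorphism.

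The main obstacle I expect is Step two, the surjectivity, made delicate by two features absent in cohomology: $R(T)$ is not graded (so there is no clean degree/filtration bookkeeping forcing generation), and $R(T)$ is not local (so a naive Nakayama does not apply globally). I would handle this either by (a) completing at the augmentation ideal, proving surjectivity there by Nakayama using Grothendieck's non-equivariant result, and then descending by faithful flatness of the relevant localization together with the $R(T)$-module freeness of $K^0_T(\calF)$; or (b) using the $\lambda$-ring/filtration by codimension of support on $K^0_T(\calF)$ whose associated graded recovers $H^*_T(\calF)$ (up to torsion/completion issues), transporting the known surjectivity from the cohomological Borel presentation. A cleaner alternative, if one is willing to cite it, is McLeod's thesis~\cite{McL} directly, or Kostant–Kumar's construction of $K^0_T(G/B)$ as a quotient of $R(T)\otimes_{R(T)^W}R(T)$ combined with a rank count; but since the proposition is attributed to McLeod, the expected proof is the base-change argument sketched above with the surjectivity step being where the real work lies.
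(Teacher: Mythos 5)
The paper itself gives no proof of this proposition---it is quoted from McLeod~\cite{McL}, whose argument runs through the K\"unneth spectral sequence in equivariant $K$-theory together with the Pittie--Steinberg theorem that $R(T)$ is a free $R(T)^W$-module of rank $|W|$---so there is no ``same approach'' to measure you against; yours is an attempt at an independent proof. Two of your three steps are fine: the identity $\gamma(h)=\rho^*(h)$ for $h\in R(T)^W$ does follow from the fixed-point restrictions $i_w^*[\calL_\lambda]=[\C_{w.\lambda}]$ together with injectivity of localization, and once surjectivity is known, comparing two free $R(T)$-modules of rank $|W|$ (freeness of $R(T)\otimes_{R(T)^W}R(T)$ being exactly Pittie--Steinberg) does force an isomorphism.

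The genuine gap is Step two, and neither of your proposed repairs closes it. Nakayama after completing at the augmentation ideal $I\subset R(T)$ only shows that the cokernel $C$ of your map vanishes after localization at $I$; since $R(T)$ is not local, $C$ could a priori be supported at other maximal ideals (points $t\in T$), and completion at $I$ is flat but not faithfully flat over $R(T)$, so there is no ``descent by faithful flatness'' available---this is precisely the failure you flagged, restated rather than circumvented. (By the localization theorem of Segal, surjectivity at the maximal ideal of $t\in T$ is really a statement about the fixed locus $\calF^t$, which needs its own argument.) The filtration-by-support comparison with $H^*_T(\calF)$ has the same defect: it identifies associated graded pieces only after completion, so again it controls nothing away from $I$. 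To turn the sketch into a proof one needs a global input: for instance, exhibit an explicit $R(T)$-basis in the image, such as the classes $\gamma(e^{\lambda_w})$ of a Steinberg basis of $R(T)$ over $R(T)^W$, shown to be a basis by a triangularity argument at the $T$-fixed points; or generate the Schubert classes $[\calO_{X_w}]$ by Demazure operators from the class of a point written in line bundles, as Kostant and Kumar do; or simply cite~\cite{McL}, as the paper does. Note also that the non-equivariant statement you attribute to Grothendieck, $K^0(\calF)\simeq\Z\otimes_{R(T)^W}R(T)$, is itself a theorem of Pittie and Hodgkin and is of essentially the same depth as the proposition being proved, so it cannot be treated as free input.
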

%%%%%%%%%%%%%%%%%%%%%%%%%%%%%%%%%%%%%%%%%%%%%%%%%%%%%%%%%%%%%%%%%%%%%%%%%%%%%%%%%

There are several bases of $K^0_T(\calF)$ that come from the Bruhat
decomposition~\eqref{Eq:BruhatDecomposition}; we recommend~\cite{GrKu} for details.
The classes $\{[\calO_{X_w}]\mid w\in W\}$ of structure sheaves of Schubert varieties $X_w=\overline{Bw.B}$ form
an $R(T)$-basis for $K^0_T(\calF)$. 
A different basis is given by the classes $\{[\calO_{X^w}]\mid w\in W\}$ of the Schubert varieties
$X^w=\overline{B_-w.B}$.
Let $\defcolor{\calI_w}$ be the sheaf of $\calO_{X_w}$-ideals defining the complement of the Schubert cell $Bw.B$
in $X_w$, then $\{[\calI_w]\mid w\in W\}$ also forms an $R(T)$-basis for $K^0_T(\calF)$.
The last two are dual bases,
\[
   \langle [\calO_{X^v}]\,,\, [\calI_w] \rangle\ =\ \delta_{v,w}\,.
\]
The ideal sheaves may be expressed in terms of Schubert structure sheaves
 \begin{equation}\label{Eq:ideal_sheaves}
%   [\calI_w]\ =\ \sum_{v\leq w} (-1)^{\ell(w)-\ell(v)} \calG_v\,.
   [\calI_w]\ =\ \sum_{v\leq w} (-1)^{\ell(w)-\ell(v)} [\calO_{X^v}]\,.
 \end{equation}

There are \demph{Schubert structure constants} $\defcolor{b^w_{u,v}}\in R(T)$ defined by the identity
\[
   [\calO_{X^u}]\cdot[\calO_{X^v}]\ =\ \sum_w b^w_{u,v}\, [\calO_{X^w}]\,.
\]
Griffeth and Ram~\cite{GrRa} conjectured that these exhibit a positivity generalizing Graham's positivity for
equivariant cohomology.
This was proven by Anderson, Griffeth, and Miller~\cite{AGM}.
These coefficients have a formula similar to~\eqref{Eq:triple-Int},
 \begin{equation}\label{eq:K-SSC}
    b^w_{u,v}\ =\ \rho_*( [\calO_{X^u}]\cdot[\calO_{X^v}] \cdot [\calI_w])\,.
 \end{equation}

Equivariant $K$-theory of the flag manifold has similar behavior with respect to localization as does the
equivariant cohomology.
The following proposition is due to Vezzosi and Vistoli~\cite{VV} and to Knutson~\cite{Ro}.

%%%%%%%%%%%%%%%%%%%%%%%%%%%%%%%%%%%%%%%%%%%%%%%%%%%%%%%%%%%%%%%%%%%%%%%%%%%%%%%%%
\begin{proposition}
 Let $\jmath\colon \calF_1\hookrightarrow\calF$ be the inclusion of the equivariant one-skeleton of $\calF$.
 Then the map $\iota^*\colon K^0_T(\calF)\to K^0_T(\calF^T)$ is an injection and it has the same image as the map 
 $\jmath^*\colon K^0_T(\calF_1)\to K^0_T(\calF^T)$.
\end{proposition}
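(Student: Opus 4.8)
The plan is to organize the argument around the complex
\[
  0\ \longrightarrow\ K^0_T(\calF)\ \xrightarrow{\ \iota^*\ }\ \bigoplus_{w\in W} R(T)\ \xrightarrow{\ \partial\ }\ \bigoplus_{C} R(T)/(1-e^{\chi_C})\,,
\]
where $C$ runs over the closures of the one-dimensional $T$-orbits of $\calF$---each a $T$-invariant $\P^1$ with two fixed points $w_C,w_C'$ and character $\chi_C$ of $T$ acting on a tangent line---and $\partial$ records the reductions of $\phi(w_C)-\phi(w_C')$. The proposition is equivalent to exactness of this complex at its first two terms: injectivity of $\iota^*$ is exactness at $K^0_T(\calF)$, and a Mayer--Vietoris computation over the bouquet of $\P^1$'s comprising $\calF_1$ identifies $\ker\partial$ with $\mathrm{im}(\jmath^*)$, so that exactness at $\bigoplus_w R(T)$ becomes the asserted equality of images.

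First I would record the structural fact that $K^0_T(\calF)$ is a free $R(T)$-module of rank $|W|$---the $K$-theoretic equivariant formality of $\calF$---which follows from the Bruhat (equivalently, Bia{\l}ynicki--Birula) decomposition of $\calF$ into $T$-invariant affine cells, with the classes $[\calO_{X_w}]$ as a basis. In particular $K^0_T(\calF)$ is torsion-free and, since $R(T)=\Z[\Xi(T)]$ is a regular (hence Krull) domain, reflexive, so $K^0_T(\calF)=\bigcap_{\mathrm{ht}\,\mathfrak p=1}K^0_T(\calF)_{\mathfrak p}$ inside $\mathrm{Frac}(R(T))\otimes_{R(T)}K^0_T(\calF)$. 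Injectivity of $\iota^*$ is then immediate: by Thomason's concentration theorem, inverting the multiplicative set $\Sigma\subset R(T)$ generated by the $1-e^\lambda$ with $\lambda\neq 0$ turns $\iota^*$ into an isomorphism, and a torsion-free module embeds into its localization. (Alternatively, in the bases above the matrix of $\iota^*$ is triangular for the Bruhat order with diagonal entries products of factors $1-e^{\chi}$, a nonzerodivisor.)

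For the equality of images, the inclusion $\mathrm{im}(\iota^*)\subseteq\mathrm{im}(\jmath^*)$ is pure functoriality, since the inclusion $\calF^T\hookrightarrow\calF$ factors through $\calF^T\hookrightarrow\calF_1\hookrightarrow\calF$. The reverse inclusion $\ker\partial\subseteq\mathrm{im}(\iota^*)$ is the substantive point, and I would prove it by a Chang--Skjelbred-type localization argument. Given $\phi\in\ker\partial$, the localization isomorphism over $\Sigma^{-1}R(T)$ yields a unique $y\in\Sigma^{-1}K^0_T(\calF)$ with $\iota^*(y)=\phi$, and by reflexivity it is enough to show $y\in K^0_T(\calF)_{\mathfrak p}$ for each height-one prime $\mathfrak p$, with associated subtorus $T_{\mathfrak p}\subseteq T$. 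For all but finitely many $\mathfrak p$ one has $\calF^{T_{\mathfrak p}}=\calF^T$, and Thomason localization at $\mathfrak p$ then makes $\iota^*$ an isomorphism $K^0_T(\calF)_{\mathfrak p}\xrightarrow{\ \sim\ }\bigoplus_w R(T)_{\mathfrak p}$, so $y\in K^0_T(\calF)_{\mathfrak p}$ for free. For the remaining finitely many primes, $T_{\mathfrak p}$ is the common stabilizer of some of the curves $C$ and $\calF^{T_{\mathfrak p}}$ is a disjoint union of those $\P^1$'s together with isolated fixed points; here $\iota^*$ factors as the isomorphism $K^0_T(\calF)_{\mathfrak p}\xrightarrow{\ \sim\ }K^0_T(\calF^{T_{\mathfrak p}})_{\mathfrak p}$ followed by restriction to $\calF^T$, whose image decouples over components into the identity $K^0_T(\P^1)\xrightarrow{\ \sim\ }\{(a,b)\in R(T)^{\oplus 2}\mid a-b\in(1-e^{\chi_C})R(T)\}$---precisely the edge relations along those $C$, localized at $\mathfrak p$, which hold because $\phi\in\ker\partial$. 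A brief diagram chase identifies the resulting local preimage with $y$, giving $y\in K^0_T(\calF)_{\mathfrak p}$ in every case, hence $y\in K^0_T(\calF)$ and $\phi\in\mathrm{im}(\iota^*)$.

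The hard part is this last step, and within it the equivariant-formality input $K^0_T(\calF)\cong R(T)^{\oplus|W|}$; granted Thomason's concentration theorem and the routine computation of $K^0_T(\P^1)$, the remainder is bookkeeping with localizations. Two technical points want care: $R(T)$ is a Laurent polynomial ring, not a polynomial ring, so one works with its height-one primes and the associated subtori in place of hyperplanes---but it is still regular, so the reflexive-module identity above applies---and the edge congruences here are multiplicative, involving $1-e^{\chi_C}$ rather than $\chi_C$. Run verbatim with $S$ replacing $R(T)$ and $\alpha$ replacing $1-e^\alpha$, the same argument reproves Proposition~\ref{P:CS}.
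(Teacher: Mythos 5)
The paper does not actually prove this proposition: it is quoted as a known result, attributed to Vezzosi--Vistoli \cite{VV} and to Rosu--Knutson \cite{Ro}, so there is no internal argument to compare against. Your sketch is, in substance, the proof those references (and the Chang--Skjelbred template recalled in Proposition~\ref{P:CS}) give: equivariant formality of $\calF$ via the Bruhat/Bia{\l}ynicki--Birula cells, Thomason's concentration theorem to get injectivity and generic surjectivity of $\iota^*$, and a height-one-prime (reflexivity) argument reducing the image computation to the rank-one fixed loci, where everything comes down to $K^0_T(\P^1)$ and the congruences modulo $1-e^{\chi_C}$. That is the right route, and the outline is correct. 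A few points deserve tightening if you were to write it out. First, to close the circle you also need the easy containment $\mathrm{im}(\jmath^*)\subseteq\ker\partial$ (restriction to each curve factors through $K^0_T(C)$, whose image in $R(T)^{\oplus 2}$ lies in the congruence subset); your phrasing identifies $\ker\partial$ with $\mathrm{im}(\jmath^*)$ by Mayer--Vietoris, but only the containment just stated is needed, and it is cleaner than invoking exactness for the singular curve $\calF_1$. Second, since $R(T)=\Z[\Xi(T)]$ has $\Z$ as coefficients, the height-one primes include those lying over prime integers, and ``the subtorus $T_{\mathfrak p}$'' should really be a diagonalizable subgroup scheme; moreover $\ker\alpha$ can be disconnected when a root is divisible in the weight lattice (long roots in type $C$), so the clean way to run the dichotomy is: the cokernel of $\iota^*$ is supported on $\bigcup_{\alpha}V(1-e^{\alpha})$ (finitely many height-one primes), and only at those primes does one need the $\P^1$ analysis of $\calF^{\ker\alpha}$. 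Third, the identification of the image of $K^0_T(\P^1)\to R(T)^{\oplus 2}$ with the congruence subset is a short but genuine computation (surjectivity onto that subset uses, e.g., the classes of $\calO_{\P^1}$ and of a fixed point). With those routine repairs your argument is a complete proof, and, as you note, it specializes verbatim to reprove the cohomological statement.
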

%%%%%%%%%%%%%%%%%%%%%%%%%%%%%%%%%%%%%%%%%%%%%%%%%%%%%%%%%%%%%%%%%%%%%%%%%%%%%%%%%

The Grothendieck ring $K^0_T(\calF^T)$ is also similarly simple,
\[
   K^0_T(\calF^T)\
   \ =\ \bigoplus_{w\in W} R(T)\ =\ \Maps(W,R(T))\,, 
\]
and restriction to a fixed point is similar to that of equivariant cohomology,
\[
    i_w^*(f\otimes g)\ =\ f\cdot (w.g)\,,
\]
where $f,g\in R(T)$ and $f\otimes g\in R(T)\otimes_{R(T)^W}\! R(T)\simeq K^0_T(\calF)$.
 
Classes $\phi\in\Maps(W,R(T))$ lying in the image of $K^0_T(\calF)$ under $i^*$ satisfy analogs of the GKM
relations. 
For a root $\alpha$ of $G$ and $u\in W$, we have
 \[
    \phi(u)\ -\ \phi(s_\alpha u)\ \in\ \langle 1-\alpha\rangle\,,
 \]
where $s_\alpha\in W$ is the reflection corresponding to $\alpha$ and $\langle 1-\alpha\rangle$ is the principal ideal
of $R(T)$ generated by $1-\alpha$  (as roots of $G$ are characters of $T$).

%%%%%%%%%%%%%%%%%%%%%%%%%%%%%%%%%%%%%%%%%%%%%%%%%%%%%%%%%%%%%%%%%%%%%%%%%%%%%%%%% 
% 
\section{Geometry of the pattern map}\label{S:pattern} 
Billey and Braden~\cite{BB} defined the geometric pattern map and developed its main properties.
Let $G,B,T,W$ be a connected and simply connected complex semisimple linear algebraic group, a
Borel subgroup, a maximal torus contained in $B$, and Weyl group as before.
Let $\defcolor{\eta}\colon\C^*\to T$ be a cocharacter whose image is the subgroup \defcolor{$T_\eta$} of $T$. 
Springer~\cite[Theorem 6.4.7]{Sp} showed that the centralizer ${G'}:=Z_G(T_\eta)$ of $T_\eta$ in $G$ 
is a connected, reductive subgroup and $T$ is also a maximal torus of $G'$. 
Also, if $B_0\in\calF$ is a fixed point of $T_\eta$, so that $T_\eta\subset B_0$, then 
$B_0\cap G'$ is a Borel subgroup of $G'$. 

In type $A$, if we have $G=GL(n,\C)$ with $W=S_n$, the symmetric group on $n$ letters, then 
$G'\simeq GL(n_1,\C)\times\dotsb\times GL(n_s,\C)$ with $W'\simeq S_{n_1}\times\dotsb\times S_{n_s}$, where
$n=n_1+\dotsb+n_s$.
In general, $G'$ may be any Levi subgroup of $G$.
For example, any group whose Dynkin diagram is obtained from that of $G$ by deleting some nodes.
 
Set $B':=G'\cap B$.
Let $\Blue{\calF'}:=G'/B'$ be the flag variety of $G'$, and \defcolor{$\calF^\eta$} be the set of 
$T_\eta$-fixed points of $\calF$, which retains an action of $G'$. 
Sending a $T_\eta$-fixed point $B_0\in\calF^{\eta}$ to its intersection with $G'$, $B_0\cap G'$, defines a 
$G'$-equivariant map $\defcolor{\psi}\colon\calF^{\eta}\to\calF'$. 
Restricting to $T$-fixed points gives a map $\psi\colon W\to W'$, where \defcolor{$W'$} is the Weyl group of 
$G'$. 
This is the Billey-Postnikov pattern map~\cite{BiPo} which is the unique map 
$\psi\colon W\to W'$ that is $W'$-equivariant in that $\psi(wx)=w\psi(x)$ for $w\in W'$ and $x\in W$, and 
which respects the Bruhat order in that if $\psi(x)\leq\psi(wx)$ in $W'$ with $w\in W'$
and $x\in W$, then $x\leq wx$ in $W$.  
Billey and Braden use this to deduce that the map $\psi$ is a $G'$-equivariant isomorphism between each 
connected component of $\calF^{\eta}$ with the flag variety $\calF'$, and also that the connected 
components of $\calF^{\eta}$ are in bijection with right cosets  $W'\backslash W$ of $W'$ in $W$. 
 
Observe that $B_-\cap G'=\defcolor{B_-'}$, the Borel group opposite to $B'$ containing $T$. 
Let \defcolor{$\calF^{\eta}_\varsigma$} be the component of $\calF^{\eta}$ corresponding to a coset 
$W'\varsigma$ with $\varsigma\in W'\varsigma$ having minimal length, and let 
$\defcolor{\iota_\varsigma}\colon\calF'\xrightarrow{\sim}\calF^{\eta}_\varsigma$ be the corresponding 
section of the pattern map. 
(This is the unique $G'$-equivariant map sending the $T$-fixed point $e.B'\in\calF'$ to the $T$-fixed 
point $\varsigma.B\in\calF^\eta$.) 
We use a refined result of Billey and Braden.

%%%%%%%%%%%%%%%%%%%%%%%%%%%%%%%%%%%%%%%%%%%%%%%%%%%%%%%%%%%%%%%%%%%%%%%%%%%%%%%%%
\begin{proposition}[Theorem~2.3~\cite{AS}]
 Let $W'\varsigma$ be a coset of\/ $W'$ in $W$ with $\varsigma$ of minimal length in $W'\varsigma$ and 
 let $\iota_\varsigma\colon \calF'\to\calF^{\eta}$ be the corresponding section of the pattern map.
 For $w\in W'$ we have 
\[
   \iota_\varsigma(X'_w) \ =\ X^\varsigma\cap X_{w\varsigma}\,.
\]
\end{proposition}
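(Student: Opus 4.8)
The plan is to use the $G'$-equivariance of the section $\iota_\varsigma$ to rewrite $\iota_\varsigma(X'_w)$ as an explicit $B'$-orbit closure, to sandwich that orbit closure between the two Schubert varieties $X^\varsigma$ and $X_{w\varsigma}$, and then to promote the resulting inclusion to an equality by a dimension count. First I would pin down $\iota_\varsigma$ on $T$-fixed points. Since $\iota_\varsigma$ is a section of $\psi$ with $\iota_\varsigma(e.B')=\varsigma.B$ we have $\psi(\varsigma)=e$, and since $\iota_\varsigma$ is $G'$-equivariant and $T\subset G'$, the point $\iota_\varsigma(w.B')$ lies among the $T$-fixed points $W'\varsigma$ of $\calF^\eta_\varsigma$ and maps to $w.B'$ under $\psi$; as $\psi$ restricts on this coset to the bijection $u\varsigma\mapsto u$, we get $\iota_\varsigma(w.B')=(w\varsigma).B$. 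Because $\iota_\varsigma$ is a $G'$-equivariant closed embedding and $B'=G'\cap B\subseteq G'$, this yields
\[
  \iota_\varsigma(X'_w)\ =\ \iota_\varsigma\bigl(\overline{B'w.B'}\bigr)\ =\ \overline{B'\cdot(w\varsigma).B}\,,
\]
the closure taken in $\calF$, since $\calF^\eta_\varsigma$ is closed in $\calF$.

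Next I would prove the two containments $\overline{B'(w\varsigma).B}\subseteq X_{w\varsigma}$ and $\overline{B'(w\varsigma).B}\subseteq X^\varsigma$. The first is immediate from $B'\subseteq B$: then $B'(w\varsigma).B\subseteq B(w\varsigma).B\subseteq X_{w\varsigma}$, and we pass to closures. The second is the heart of the matter. Here I would use that $X^\varsigma$ is $B_-$-stable, hence $B_-'$-stable for $B_-'=B_-\cap G'$, while $\calF^\eta$ is $G'$-stable; therefore $X^\varsigma\cap\calF^\eta_\varsigma$ is a $B_-'$-stable closed subvariety of the component $\calF^\eta_\varsigma$. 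Via the isomorphism $\iota_\varsigma\colon\calF'\to\calF^\eta_\varsigma$ it corresponds to a $B_-'$-stable closed subvariety of $\calF'$ that contains the point $e.B'$, because $\iota_\varsigma(e.B')=\varsigma.B\in X^\varsigma$; such a subvariety must contain the $B_-'$-orbit through $e.B'$, which is dense in $\calF'$, and hence is all of $\calF'$. Thus $\calF^\eta_\varsigma\subseteq X^\varsigma$, and in particular $\iota_\varsigma(X'_w)\subseteq X^\varsigma$. Combining, $\iota_\varsigma(X'_w)\subseteq X^\varsigma\cap X_{w\varsigma}$.

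Finally I would upgrade this inclusion to an equality by comparing dimensions. The left side is irreducible, being the image of the irreducible $X'_w$ under the embedding $\iota_\varsigma$, of dimension $\ell'(w)$, where $\ell'$ is the length function of $W'$. The right side is nonempty since $\varsigma\le w\varsigma$ (as $\varsigma$ is minimal in its coset, which also follows from the Bruhat-compatibility of $\psi$), hence is irreducible of dimension $\ell(w\varsigma)-\ell(\varsigma)$ by the cited facts on Richardson varieties; and $\ell(w\varsigma)=\ell'(w)+\ell(\varsigma)$ since $\varsigma$ is a minimal right-coset representative, so the two dimensions agree. An inclusion of irreducible varieties of the same dimension is an equality, which completes the proof.

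The step I expect to be the main obstacle is the containment $\iota_\varsigma(X'_w)\subseteq X^\varsigma$: the left side is naturally a ``$B'$-positive'' subvariety of the component $\calF^\eta_\varsigma$, and one must recognize it inside the \emph{opposite} Schubert variety $X^\varsigma$. The device above—that $X^\varsigma\cap\calF^\eta_\varsigma$ is $B_-'$-stable and contains the bottom fixed point $\varsigma.B$, which forces it to be the whole component—is exactly what makes this go through. One should also be mindful that the length-additivity $\ell(w\varsigma)=\ell'(w)+\ell(\varsigma)$ for minimal coset representatives must be invoked in the generality of the Weyl group $W'$ of a Levi subgroup.
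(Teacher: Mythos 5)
Your argument is correct and proceeds along what I believe is essentially the same route as the cited source: use $G'$-equivariance to identify $\iota_\varsigma(X'_w)$ with $\overline{B'(w\varsigma).B}$, obtain the containment in $X_{w\varsigma}$ from $B'\subset B$ and the containment $\calF^\eta_\varsigma\subseteq X^\varsigma$ from $B'_-$-stability together with the density of the opposite big cell in $\calF'$, and then conclude by the dimension count using the irreducibility of the Richardson variety and the length-additivity $\ell(w\varsigma)=\ell_{W'}(w)+\ell(\varsigma)$ for minimal coset representatives. The one ingredient you invoke but do not prove is precisely that length-additivity for the (generally non-standard-parabolic) reflection subgroup $W'$; this is a known consequence of the Billey--Postnikov/Billey--Braden analysis of the pattern map on cosets, and you are right to flag it as the point needing a citation rather than a reproof.
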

%%%%%%%%%%%%%%%%%%%%%%%%%%%%%%%%%%%%%%%%%%%%%%%%%%%%%%%%%%%%%%%%%%%%%%%%%%%%%%%%%
 
%%%%%%%%%%%%%%%%%%%%%%%%%%%%%%%%%%%%%%%%%%%%%%%%%%%%%%%%%%%%%%%%%%%%%%%%%%%%%%%%%
\begin{corollary}\label{C:pushforward_formulae}
 For $w\in W'$, we have 
 \begin{eqnarray*}
   \iota_{\varsigma,*}[X'_w]^T& =& [X^\varsigma\cap X_{w\varsigma}]^T\ =\ 
      \frakS_{\varsigma}\frown [X_{w\varsigma}]^T, \\
    \iota_{\varsigma,*}[\calO_{X'_w}]& =& [\calO_{X^\varsigma\cap X_{w\varsigma}}]\ =\ 
   [\calO_{X^\varsigma}]\cdot [\calO_{X_{w\varsigma}}]\,,\quad\mbox{and} \\
    \iota_{\varsigma,*}[\calI_w]& =& [\calO_{X^\varsigma}\otimes\calI_{w\varsigma}]\ =\ 
   [\calO_{X^\varsigma}]\cdot [\calI_{w\varsigma}]\,.
 \end{eqnarray*}
\end{corollary}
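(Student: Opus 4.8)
The plan is to deduce all three identities from the preceding proposition, $\iota_\varsigma(X'_w)=X^\varsigma\cap X_{w\varsigma}$, together with the duality relations~\eqref{Eq:duality} and the expression~\eqref{Eq:ideal_sheaves} for ideal sheaves. Since $\iota_\varsigma$ is an isomorphism onto its image $\calF^\eta_\varsigma$, it carries the fundamental cycle $[X'_w]^T$ to the fundamental cycle of the image subvariety, and likewise carries structure sheaves to structure sheaves; thus $\iota_{\varsigma,*}[X'_w]^T=[X^\varsigma\cap X_{w\varsigma}]^T$ and $\iota_{\varsigma,*}[\calO_{X'_w}]=[\calO_{X^\varsigma\cap X_{w\varsigma}}]$ are immediate from the proposition. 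For the cohomology statement, the second equality is exactly the right-hand identity in~\eqref{Eq:duality} with $v=\varsigma$ and the Schubert variety $X_{w\varsigma}$. For the first $K$-theory statement, I would invoke the standard fact (e.g.\ from~\cite{GrKu}) that the intersection $X^\varsigma\cap X_{w\varsigma}$ is a reduced, irreducible, Cohen--Macaulay subvariety whose structure sheaf is the derived tensor product $[\calO_{X^\varsigma}]\cdot[\calO_{X_{w\varsigma}}]$ in $K^0_T(\calF)$; this is the $K$-theoretic shadow of the transversality/$\mathrm{Tor}$-vanishing for Schubert varieties of opposite types, and gives $[\calO_{X^\varsigma\cap X_{w\varsigma}}]=[\calO_{X^\varsigma}]\cdot[\calO_{X_{w\varsigma}}]$.

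For the third identity I would argue by pushing forward relation~\eqref{Eq:ideal_sheaves} applied in $\calF'$. Writing $[\calI_w]=\sum_{v\le w}(-1)^{\ell(w)-\ell(v)}[\calO_{X'^v}]$ in $K^0_T(\calF')$ and applying $\iota_{\varsigma,*}$, I would use the (opposite-type) version of the second identity, $\iota_{\varsigma,*}[\calO_{X'^v}]=[\calO_{X^\varsigma}]\cdot[\calO_{X^{v\varsigma}}]$ --- which follows from the analogue of the proposition for the opposite Schubert varieties, or equivalently by applying the $w_o$-symmetry --- together with the order-preserving property of the pattern section ($v\le w$ in $W'$ corresponds to $v\varsigma\le w\varsigma$ in the relevant subposet, using that $\varsigma$ is minimal in its coset). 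Collecting terms and factoring out $[\calO_{X^\varsigma}]$, the sum $\sum_{v\le w}(-1)^{\ell(w\varsigma)-\ell(v\varsigma)}[\calO_{X^{v\varsigma}}]$ is, after the length shift $\ell(v\varsigma)-\ell(\varsigma)=\ell(v)$, exactly $[\calI_{w\varsigma}]$ by~\eqref{Eq:ideal_sheaves} again, giving $\iota_{\varsigma,*}[\calI_w]=[\calO_{X^\varsigma}]\cdot[\calI_{w\varsigma}]$. The identification of this class with $[\calO_{X^\varsigma}\otimes\calI_{w\varsigma}]$ is then the geometric statement that tensoring the ideal sheaf by $\calO_{X^\varsigma}$ has no higher $\mathrm{Tor}$, which is the same Cohen--Macaulayness/transversality input as above.

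The main obstacle is the $K$-theoretic $\mathrm{Tor}$-independence: making precise that $\calO_{X^\varsigma}$ and $\calO_{X_{w\varsigma}}$ (and $\calI_{w\varsigma}$) meet without higher Tor, so that the derived tensor product reduces to the ordinary structure sheaf of the intersection. I expect this to rest on the fact that $X^\varsigma$ and $X_{w\varsigma}$ are Schubert varieties with respect to opposite Borel subgroups, hence their intersection is a Richardson variety of the expected dimension $\ell(w\varsigma)-\ell(\varsigma)$, which is normal and Cohen--Macaulay with rational singularities; these properties are precisely what forces the $\mathrm{Tor}$-vanishing. Care is also needed to keep the equivariant structure and the ideal-sheaf bookkeeping (signs, length shifts by $\ell(\varsigma)$) consistent, but these are routine once the transversality is in hand. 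The remaining identities are then formal consequences of the proposition and the definitions recalled in Section~\ref{S:flag}.
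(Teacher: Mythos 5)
Your treatment of the first two lines matches the paper's: the first equalities follow from the proposition because $\iota_\varsigma$ is an isomorphism onto the closed subvariety $\calF^\eta_\varsigma$, the second equality in the cohomology line is exactly~\eqref{Eq:duality}, and the Tor-independence of $X^\varsigma$ and $X_{w\varsigma}$ (Schubert varieties for opposite Borel subgroups, intersecting in a Richardson variety) giving $[\calO_{X^\varsigma\cap X_{w\varsigma}}]=[\calO_{X^\varsigma}]\cdot[\calO_{X_{w\varsigma}}]$ is precisely the result the paper quotes from Brion~\cite[Lemma 2]{Br02}. Up to the choice of reference, that part is fine.

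The third line is where your argument breaks. The intermediate identity you invoke, $\iota_{\varsigma,*}[\calO_{X'^v}]=[\calO_{X^\varsigma}]\cdot[\calO_{X^{v\varsigma}}]$, is false: since $X^{v\varsigma}\subseteq X^\varsigma$, the right-hand side is a self-intersection-type product, not a Richardson class. Already for $v=e$ it asserts $[\calO_{\calF^\eta_\varsigma}]=[\calO_{X^\varsigma}]^2$, which fails equivariantly (in $SL_2$ with $\varsigma=s$ this says $[\calO_{s.B}]=[\calO_{s.B}]^2$ in $K^0_T(\P^1)$, wrong because of the weight on $\mathrm{Tor}_1$). The correct image is $\iota_\varsigma(X'^v)=X^{v\varsigma}\cap X_{w'_o\varsigma}$, where $w'_o$ is the longest element of $W'$, so the pushforward is $[\calO_{X^{v\varsigma}}]\cdot[\calO_{X_{w'_o\varsigma}}]$; no "$w_o$-symmetry" converts this into your expression, because the section $\iota_\varsigma$ is pinned to the \emph{minimal} coset representative and is not compatible with swapping $B$ and $B_-$. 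Moreover, even with the corrected pushforward, your resummation does not close up: applying $\iota_{\varsigma,*}$ to~\eqref{Eq:ideal_sheaves} in $\calF'$ produces an alternating sum indexed by $\{v\varsigma \mid v\in W',\ v\leq w\}$, whereas $[\calI_{w\varsigma}]$ in~\eqref{Eq:ideal_sheaves} is an alternating sum over the entire Bruhat interval $\{u\in W\mid u\leq w\varsigma\}$, which is strictly larger; so factoring out $[\calO_{X^\varsigma}]$ and matching terms as you propose does not yield $[\calO_{X^\varsigma}]\cdot[\calI_{w\varsigma}]$ without a substantial additional argument. The paper does not attempt this route: it quotes the third line directly from Lemma~2.12 of~\cite{AS}, whose proof works with the ideal sheaf itself (the subscheme structure of the complement of the cell in $X^\varsigma\cap X_{w\varsigma}$ and the attendant Tor-vanishing), not with an expansion into opposite Schubert classes.
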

%%%%%%%%%%%%%%%%%%%%%%%%%%%%%%%%%%%%%%%%%%%%%%%%%%%%%%%%%%%%%%%%%%%%%%%%%%%%%%%%%
 
The second equality for Schubert cycles is~\eqref{Eq:duality}, the second equality for Schubert
structure sheaves is due to Brion~\cite[Lemma 2]{Br02}, and the third line is from Lemma~2.12 of~\cite{AS}. 

In~\cite{AS}, we computed the pullback of equivariant line bundles.

%%%%%%%%%%%%%%%%%%%%%%%%%%%%%%%%%%%%%%%%%%%%%%%%%%%%%%%%%%%%%%%%%%%%%%%%%%%%%%%%% 
\begin{proposition}[Lemma~2.6~\cite{AS}]\label{P:pullbackLineBunde}
   For $\lambda\in\Xi(T)$, we have $\iota^*_\varsigma(\calL_\lambda)= \calL_{\varsigma.\lambda}$.
\end{proposition}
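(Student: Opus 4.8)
The plan is to make the section $\iota_\varsigma$ completely explicit as a map of homogeneous spaces and then compute $\iota_\varsigma^*\calL_\lambda$ directly from the description $\calL_\lambda=G\times_B\C_\lambda$. Fix a representative $\sigma\in N(T)$ of $\varsigma$, so the $T$-fixed point $\varsigma.B\in\calF^\eta$ is the coset $\sigma B\in G/B$. Since $\iota_\varsigma$ is the unique $G'$-equivariant map $\calF'\to\calF$ with $\iota_\varsigma(e.B')=\varsigma.B$, and $gB'=g\cdot(e.B')$ under the left $G'$-action, equivariance forces $\iota_\varsigma(gB')=g\sigma B$ for $g\in G'$. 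First I would check this formula is well defined, which amounts to $\sigma^{-1}B'\sigma\subseteq B$, i.e.\ to $B'\subseteq\varsigma.B=\sigma B\sigma^{-1}$. This inclusion is exactly the statement that $\varsigma$ has minimal length in its coset $W'\varsigma$ (equivalently, $\psi(\varsigma.B)=\varsigma.B\cap G'=B'$); it is the only input from the geometry of the pattern map, and I would take it from the Billey--Braden setup recalled above. The same inclusion yields $\psi\circ\iota_\varsigma=\id_{\calF'}$ and, since $G'=Z_G(T_\eta)$ centralizes $T_\eta$, that $\iota_\varsigma$ really lands in $\calF^\eta$.

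Next I would compute the pullback from the quotient presentation $\calL_\lambda=(G\times\C)/\!\sim$, where $(xb,z)\sim(x,\lambda(b)z)$ for $b\in B$. The fiber of $\iota_\varsigma^*\calL_\lambda$ over $gB'$ is the fiber of $\calL_\lambda$ over $g\sigma B$; sending $(g,z)\in G'\times\C$ to the point of $\calL_\lambda$ over $g\sigma B$ represented by $(g\sigma,z)$ realizes
\[
   \iota_\varsigma^*\calL_\lambda\ =\ (G'\times\C)/\!\approx\,,\qquad
   (gb',z)\approx\bigl(g,\,\lambda(\sigma^{-1}b'\sigma)\,z\bigr)\quad(b'\in B')\,,
\]
where $\lambda(\sigma^{-1}b'\sigma)$ makes sense precisely because $\sigma^{-1}B'\sigma\subseteq B$. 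Comparing with $G'\times_{B'}\C_\mu$, it remains only to identify the cocycle $b'\mapsto\lambda(\sigma^{-1}b'\sigma)$ on $B'$ with the character $\varsigma.\lambda$ of $T\simeq B'/[B',B']$ extended to $B'$.

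For that last step, write $b'=tu$ with $t\in T$ and $u$ in the unipotent radical $U'$ of $B'$. Using $B'\subseteq\sigma B\sigma^{-1}$ we get $\sigma^{-1}t\sigma\in T$ and $\sigma^{-1}u\sigma\in B$; the latter is unipotent, so its image in $B/[B,B]\simeq T$ is trivial and $\lambda(\sigma^{-1}u\sigma)=1$. Hence, by the definition~\eqref{Eq:rightAction} of the right $W$-action on $\Xi(T)$,
\[
   \lambda(\sigma^{-1}b'\sigma)\ =\ \lambda(\sigma^{-1}t\sigma)\ =\ (\varsigma.\lambda)(t)\ =\ (\varsigma.\lambda)(b')\,,
\]
so $\iota_\varsigma^*\calL_\lambda=G'\times_{B'}\C_{\varsigma.\lambda}=\calL_{\varsigma.\lambda}$ on $\calF'$. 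Finally I would note that the isomorphism just produced intertwines the left $T$-actions, which act by left translation on the $G$- and $G'$-factors and trivially on $\C$, so the equality holds as $T$-equivariant line bundles.

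The computation is routine throughout; the only non-formal point is the explicit form of $\iota_\varsigma$ together with the well-definedness of $gB'\mapsto g\sigma B$, and both rest on $\varsigma$ being a minimal coset representative, so that is where I would be most careful. As a consistency check one may instead localize: from the explicit description $\iota_\varsigma$ carries the $T$-fixed point $w\in W'$ to $w\varsigma\in W$, so by Lemma~\ref{L:LB_fixedPoint} and the fact that $\lambda\mapsto x.\lambda$ is a right action, $i_w^*\iota_\varsigma^*\calL_\lambda=i_{w\varsigma}^*\calL_\lambda=\C_{(w\varsigma).\lambda}=\C_{w.(\varsigma.\lambda)}=i_w^*\calL_{\varsigma.\lambda}$ for every $w\in W'$, which by Proposition~\ref{P:CS} (applied to $\calF'$) pins down $c_1^T$ of each side and is consistent with the stated identity.
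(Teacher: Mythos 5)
Your argument is correct, and it is worth noting how it sits relative to the paper: the paper does not actually prove this statement, but imports it as Lemma~2.6 of~\cite{AS}, only recording afterwards the compatibility with Lemma~\ref{L:LB_fixedPoint} via the commutative triangle through the fixed points. What you have written is a complete, self-contained proof that globalizes the paper's own fixed-point computation: the explicit formula $\iota_\varsigma(gB')=g\sigma B$ forced by $G'$-equivariance, the cocycle computation in $G\times_B\C_\lambda$, and the identification $\lambda(\sigma^{-1}b'\sigma)=(\varsigma.\lambda)(b')$ are exactly the relative version of the argument the paper gives for $i_w^*(\calL_\lambda)=\C_{w.\lambda}$. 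You also correctly isolate the one nontrivial input: well-definedness needs $\sigma^{-1}B'\sigma\subseteq B$, i.e.\ $B'\subseteq\varsigma.B$, which (comparing positive roots of the two Borel subgroups containing $T$) is equivalent to $\varsigma^{-1}(\Phi'^{+})\subseteq\Phi^{+}$, i.e.\ to $\varsigma$ being the minimal-length element of $W'\varsigma$; alternatively it follows at once from the existence of the $G'$-equivariant section with $\iota_\varsigma(e.B')=\varsigma.B$, since stabilizers can only grow along an equivariant map. One small caveat: your closing localization argument is honestly labelled a consistency check, and it should stay that way --- injectivity of $i^*$ (Proposition~\ref{P:CS}) pins down $c_1^T$ of both sides, but to conclude an isomorphism of equivariant line bundles from that one would additionally need injectivity of $c_1^T$ on $T$-equivariant Picard groups of $\calF'$, which the paper does not set up; your direct bundle computation is the proof, and it is sound.
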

%%%%%%%%%%%%%%%%%%%%%%%%%%%%%%%%%%%%%%%%%%%%%%%%%%%%%%%%%%%%%%%%%%%%%%%%%%%%%%%%% 

This is compatible with Lemma~\ref{L:LB_fixedPoint}.
Let $w\in W'$ so that $w.B'\in (\calF')^T$.
The pattern map $\iota_\varsigma$ sends $w.B'$ to $w\varsigma.B$.
We have the commutative diagram
\[
  \begin{picture}(87,37)(-1,1)
    \put(-1,1){$\pt$}
     \put(11,10){\vector(1,1){16}}  \put(8,20){\scriptsize$i_w$}
     \put(12,4){\vector(3,1){60}} \put(38,4){\scriptsize$i_{w\varsigma}$}
     \put(30,25){$\calF'$}
     \put(44,30){\vector(1,0){28}} \put(52,33){\scriptsize$\iota_\varsigma$} 
     \put(75,25){$\calF$}
   \end{picture}
\]
Then by Lemma~\ref{L:LB_fixedPoint}, 
\[
  i^*_w(\iota_\varsigma(\calL_\lambda))\ =\ 
  i^*_w(\calL_{\varsigma.\lambda})\ =\ 
  \C_{w.(\varsigma.\lambda)}\ =\ \C_{w\varsigma.\lambda}\ =\ 
  i^*_{w\varsigma}(\calL_\lambda)\,.
\]

As the pattern map $\iota_\varsigma$ is an isomorphism of $T$-varieties, it is an 
isomorphism
of the equivariant one-skeleta of
$\calF'$ and $\calF^\eta_\varsigma$ and a bijection on $T$-fixed points. 

%%%%%%%%%%%%%%%%%%%%%%%%%%%%%%%%%%%%%%%%%%%%%%%%%%%%%%%%%%%%%%%%%%%%%%%%%%%%%%%%%
\begin{example}\label{Ex:six_cosets}
 Suppose that $G=SL(4,\C)$ and $\eta(t)=\mbox{diag}(t,t,t^{-1},t^{-1})$.
 Tnen $\C^4\simeq (\C_1)^2\oplus(\C_{-1})^2$ as a $\C^\times$-module under $T_\eta$.
% As a $\C^\times$-module, $\C^4$ decomposes as $(\C_1)^2\oplus (\C_{-1})^2$ under the action induced by $T_\eta$.
 Thus $G'=SL(2,\C)\times SL(2,\C)$ with Weyl group $\calS_2\times\calS_2$ in $W=\calS_4$, the symmetric group on 
 four letters, and $\calF'=\P^1\times \P^1$, the product of two projective lines.
 The $T_\eta$-fixed point locus in $\calF$ has six components.
 The equivariant one-skeleton of $\P^1\times\P^1$ is identified with the edges and vertices of a square/diamond.
 Figure~\ref{F:six_cosets} shows the equivariant one-skeleton of $\calF$ with the equivariant one-skeleton of
 $\calF^\eta$ drawn in bold, where the faces corresponding to components of $\calF^\eta$ shaded.
\end{example}
%%%%%%%%%%%%%%%%%%%%%%%%%%%%%%%%%%%%%%%%%%%%%%%%%%%%%%%%%%%%%%%%%%%%%%%%%%%%%%%%%
\begin{figure}[htb]
\[
   \includegraphics[height=150pt]{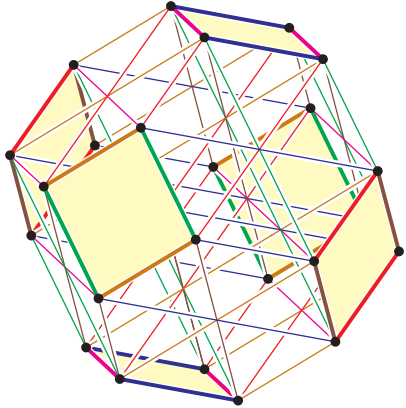}
\]
\caption{Equivariant one-skeleton of $\calF$ and $\calF^\eta$}
\label{F:six_cosets}
\end{figure}
%%%%%%%%%%%%%%%%%%%%%%%%%%%%%%%%%%%%%%%%%%%%%%%%%%%%%%%%%%%%%%%%%%%%%%%%%%%%%%%%%

%%%%%%%%%%%%%%%%%%%%%%%%%%%%%%%%%%%%%%%%%%%%%%%%%%%%%%%%%%%%%%%%%%%%%%%%%%%%%%%%% 
% 
\section{The pattern map in equivariant cohomology}\label{S:equivariant} 
 
We give three formulas for the pullback $\iota_\varsigma^*\colon H^*_T(\calF)\to \colon H^*_T(\calF')$
of the pattern map, one for each of our three presentations of equivariant cohomology.

As both $G$ and $G'$ have the same maximal torus $T$, the Borel presentations of equivariant cohomology of $\calF$
and $\calF'$ are nearly identical,
\[
   H^*_T(\calF)\ =\ S\otimes_{S^{W}} S
    \qquad\mbox{and}\qquad
   H^*_T(\calF')\ =\ S\otimes_{S^{W'}} S\,.
\]

%%%%%%%%%%%%%%%%%%%%%%%%%%%%%%%%%%%%%%%%%%%%%%%%%%%%%%%%%%%%%%%%%%%%%%%%%%%%%%%%% 
\begin{theorem}\label{Th:Borel_Coh}
 Let $\varsigma$ be the minimal length representative of the coset $W'\varsigma$ of\/ $W'$ in $W$ and 
 $\iota_\varsigma\colon\calF'\hookrightarrow\calF$ be the corresponding section of the pattern map.
 The functorial map $\iota^*_\varsigma$ on equivariant cohomology is induced by the map
 $f\otimes g\mapsto f\otimes \varsigma.g$, where $\varsigma$ acts on $S$ through its right action on 
 $\Xi(T)~\eqref{Eq:rightAction}$.
\end{theorem}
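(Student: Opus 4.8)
The plan is to use the Borel presentation structurally. Recall that in $H^*_T(\calF)=S\otimes_{S^{W}}S$ the left factor is $\rho^*(S)$, the right factor is the subring $c^T(S)$ generated by the equivariant first Chern classes $c_1^T(\calL_\lambda)$, and a class $f\otimes g$ is $\rho^*(f)\cdot c^T(g)$. Since $\rho\circ\iota_\varsigma=\rho$, functoriality gives $\iota^*_\varsigma\circ\rho^*=\rho^*$, so $\iota^*_\varsigma$ is a homomorphism of $S$-algebras; hence $\iota^*_\varsigma(f\otimes g)=f\cdot\iota^*_\varsigma(c^T(g))$ and the whole computation reduces to identifying the $\Q$-algebra map $\iota^*_\varsigma\circ c^T\colon S\to H^*_T(\calF')$.

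Next I would evaluate this composite on the generating set $\Xi(T)$ of $S$. Equivariant Chern classes commute with pullback, so for $\lambda\in\Xi(T)$ we have $\iota^*_\varsigma(c^T(\lambda))=c_1^T(\iota^*_\varsigma\calL_\lambda)$, which by Proposition~\ref{P:pullbackLineBunde} equals $c_1^T(\calL_{\varsigma.\lambda})=c^T(\varsigma.\lambda)$. Now both $\iota^*_\varsigma\circ c^T$ and $g\mapsto c^T(\varsigma.g)$ are $\Q$-algebra homomorphisms out of $S$ — the latter because the right action~\eqref{Eq:rightAction} of $\varsigma$ on $S$ is the algebra automorphism induced by a linear automorphism of $\Xi(T)$ — and they agree on $\Xi(T)$, which generates $S$ as a $\Q$-algebra; therefore $\iota^*_\varsigma(c^T(g))=c^T(\varsigma.g)$ for every $g\in S$.

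Combining the two steps, $\iota^*_\varsigma(f\otimes g)=f\cdot c^T(\varsigma.g)$, which under the Borel presentation $S\otimes_{S^{W'}}S\xrightarrow{\sim}H^*_T(\calF')$ of the target is exactly the image of $f\otimes\varsigma.g$; this is the assertion. I would note in passing that $f\otimes g\mapsto f\otimes\varsigma.g$ is well defined on the balanced tensor products because $\varsigma.s=s$ for $s\in S^{W}$ (all of $W$ fixes $S^{W}$), so the right factor does land in the $S^{W'}$-algebra $S$ — though this is automatic once one knows $\iota^*_\varsigma$ is a genuine map. I do not expect a real obstacle here: the only care needed is the bookkeeping at the two ends of $S\otimes_{S^{W}}S$, namely that $\iota^*_\varsigma$ fixes the left (pullback-from-a-point) factor and that the right factor is controlled by Proposition~\ref{P:pullbackLineBunde}.

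An alternative route, which I would mention but not adopt as the main argument, is via localization: by injectivity of $i^*$ for $\calF'$ (Proposition~\ref{P:CS}) it suffices to check that $i^*_w(\iota^*_\varsigma(f\otimes g))$ and $i^*_w(f\otimes\varsigma.g)$ agree for every $w\in W'$. Using the commuting triangle $i_{w\varsigma}=\iota_\varsigma\circ i_w$ and the fixed-point restriction formula $i^*_v(f\otimes g)=f\cdot(v.g)$, the left side is $i^*_{w\varsigma}(f\otimes g)=f\cdot((w\varsigma).g)$ while the right side is $f\cdot(w.(\varsigma.g))=f\cdot((w\varsigma).g)$, so the two coincide.
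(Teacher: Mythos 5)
Your argument is correct and is essentially the paper's own proof: fix the left factor because $\iota^*_\varsigma\circ\rho^*=\rho^*$, then use Proposition~\ref{P:pullbackLineBunde} and the fact that the right factor is generated by equivariant Chern classes of the $\calL_\lambda$. Your extra remarks (multiplicativity of $c^T$, well-definedness over $S^W$, and the localization cross-check) only spell out details the paper leaves implicit.
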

%%%%%%%%%%%%%%%%%%%%%%%%%%%%%%%%%%%%%%%%%%%%%%%%%%%%%%%%%%%%%%%%%%%%%%%%%%%%%%%%% 

%%%%%%%%%%%%%%%%%%%%%%%%%%%%%%%%%%%%%%%%%%%%%%%%%%%%%%%%%%%%%%%%%%%%%%%%%%%%%%%%% 
\begin{proof}
  Since the left hand copy of $S$ in the Borel presentation of $H^*_T(\calF)$ is simply $\rho^*(S)$, and the same
  for $H^*_T(\calF')$, we must have $\iota^*_\varsigma(f\otimes 1)=f\otimes 1$.

  The right hand copy of $S$ in the Borel presentation of $H^*_T(\calF)$ is generated by the equivariant Chern
  classes of equivariant line bundles $\calL_\lambda$ on $\calF$ and $\calF'$.
  Proposition~\ref{P:pullbackLineBunde} shows that $\iota^*_\varsigma(\calL_\lambda)=\calL_{\varsigma.\lambda}$,
  which implies the theorem. 
\end{proof}
%%%%%%%%%%%%%%%%%%%%%%%%%%%%%%%%%%%%%%%%%%%%%%%%%%%%%%%%%%%%%%%%%%%%%%%%%%%%%%%%% 

The simplest formula is for localization, for it is essentially restriction of $S$-valued functions.
Let $\iota_\varsigma\colon (\calF')^T\to\calF^T$ be the restriction of the section $\iota_\varsigma$ to the
$T$-fixed points.
For $w\in W'$, we have $\iota_\varsigma(w.B')=w\varsigma.B$.
As the only map between two points is an isomorphism, we have the following.

%%%%%%%%%%%%%%%%%%%%%%%%%%%%%%%%%%%%%%%%%%%%%%%%%%%%%%%%%%%%%%%%%%%%%%%%%%%%%%%%% 
\begin{proposition}\label{P:EC_local_restriction}
 Let $\phi\in H^*_T(\calF^T)=\Maps(W,S)$.
 Then $\iota^*_\varsigma(\phi)\in H^*_T((\calF')^T)=\Maps(W',S)$ is the map whose value at
 $w\in W'$ is $\phi(w\varsigma)$.
\end{proposition}
%%%%%%%%%%%%%%%%%%%%%%%%%%%%%%%%%%%%%%%%%%%%%%%%%%%%%%%%%%%%%%%%%%%%%%%%%%%%%%%%% 

Thus if $\alpha\in H^*_T(\calF)$ is represented as a map $\phi\colon W\to S$ via localization,
so that its value at $u\in W$ is $\phi(u)=i^*_u(\alpha)$, then $\iota^*_\varsigma(\alpha)$ is the map 
$W'\to S$ whose value at $w\in W'$ is $i^*_{w\varsigma}(\alpha)=i^*_w(\iota^*_\varsigma(\alpha))$.

We also compute the pattern map in the Schubert basis.

%%%%%%%%%%%%%%%%%%%%%%%%%%%%%%%%%%%%%%%%%%%%%%%%%%%%%%%%%%%%%%%%%%%%%%%%%%%%%%%%% 
\begin{theorem}\label{Th:EC_Schubert}
 Let $\varsigma$ be the minimal length representative of the coset $W'\varsigma$ of\/ $W'$ in $W$ and 
 $\iota_\varsigma\colon\calF'\hookrightarrow\calF$ be the corresponding section of the pattern map.
 For $u\in W$, we have 
\[
   \iota^*_\varsigma(\frakS_u)\ =\ \sum_{w\in W'} c^{w\varsigma}_{u,\varsigma}\, \frakS_w\,.
\]
\end{theorem}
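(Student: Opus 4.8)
The plan is to compute $\iota^*_\varsigma(\frakS_u)$ by expanding it in the Schubert basis $\{\frakS_w \mid w\in W'\}$ of $H^*_T(\calF')$ and identifying the coefficients via the duality pairing with Schubert cycles. Write $\iota^*_\varsigma(\frakS_u) = \sum_{w\in W'} a_w\, \frakS_w$ for some $a_w\in S$. Since the basis $\{[X'_w]^T \mid w\in W'\}$ of $H^T_*(\calF')$ is dual to the Schubert classes, we have $a_w = \rho_*(\iota^*_\varsigma(\frakS_u) \frown [X'_w]^T)$, where $\rho$ denotes the map to a point (both for $\calF'$ and for $\calF$; these are compatible since $\iota_\varsigma$ is a map of projective varieties over $\pt$).

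The key step is the projection formula~\eqref{eq:projection_formula} applied to $\phi = \iota_\varsigma$: it gives
\[
  a_w\ =\ \rho_*\bigl(\iota^*_\varsigma(\frakS_u) \frown [X'_w]^T\bigr)\ =\
  \rho_*\bigl(\frakS_u \frown \iota_{\varsigma,*}[X'_w]^T\bigr)\,.
\]
Now Corollary~\ref{C:pushforward_formulae} identifies the pushforward of the Schubert cycle: $\iota_{\varsigma,*}[X'_w]^T = [X^\varsigma\cap X_{w\varsigma}]^T = \frakS_\varsigma \frown [X_{w\varsigma}]^T$. Substituting and using that $\frown$ is a module action, $\frakS_u \frown (\frakS_\varsigma \frown [X_{w\varsigma}]^T) = (\frakS_u\cdot\frakS_\varsigma)\frown [X_{w\varsigma}]^T$, so
\[
  a_w\ =\ \rho_*\bigl(\frakS_u\cdot\frakS_\varsigma \frown [X_{w\varsigma}]^T\bigr)\,.
\]
By the formula~\eqref{Eq:triple-Int} for equivariant Schubert structure constants, this last expression is exactly $c^{w\varsigma}_{u,\varsigma}$, which completes the proof.

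The only real obstacle is bookkeeping: one must be careful that the two occurrences of $\rho_*$ (for $\calF'$ and for $\calF$) are compatible, which is exactly the content of the projection formula~\eqref{eq:projection_formula} with $Z=\calF'$, $Y=\calF$, so no extra argument is needed. One should also note that $\iota_\varsigma$ has image in $\calF^\eta\subset\calF$ and that $\iota_{\varsigma,*}$ here means the pushforward into $\calF$ (composing the isomorphism onto $\calF^\eta_\varsigma$ with the closed inclusion); Corollary~\ref{C:pushforward_formulae} is already stated in these terms. Everything else is a direct chain of the displayed identities, so there is no genuine difficulty beyond assembling them in the right order.
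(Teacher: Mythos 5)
Your proof is correct and follows essentially the same route as the paper: express the decomposition coefficients via the duality pairing, apply the projection formula to convert $\iota^*_\varsigma$ into a pushforward of a Schubert cycle, substitute the formula from Corollary~\ref{C:pushforward_formulae}, and recognize the result as a Schubert structure constant via~\eqref{Eq:triple-Int}. Your write-up is in fact slightly more careful than the paper's, which compresses the last two steps and uses the placeholder letters $v$ and $w$ where it means $\varsigma$ and $w\varsigma$.
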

%%%%%%%%%%%%%%%%%%%%%%%%%%%%%%%%%%%%%%%%%%%%%%%%%%%%%%%%%%%%%%%%%%%%%%%%%%%%%%%%% 

As the decomposition coefficients expressing the pullback of a Schubert class in the Schubert basis are Schubert
structure constants, they exhibit Graham positivity.

%%%%%%%%%%%%%%%%%%%%%%%%%%%%%%%%%%%%%%%%%%%%%%%%%%%%%%%%%%%%%%%%%%%%%%%%%%%%%%%%% 
\begin{proof}
 Let $u\in W$.
 As Schubert classes $\frakS_w$ for $w\in W'$ form an $S$-basis of $H^*_T(\calF')$, there are decomposition
 coefficients $d_u^w\in S$ defined by the identity
\[
    \iota^*_\varsigma(\frakS_u)\ =\ \sum_{w\in W'} d_u^w\,\frakS_w\,.
\]
 Using duality and the pushforward map to a point, we have
 \begin{eqnarray*}
   d_u^w&=& \rho_*(\iota^*_\varsigma(\frakS_u)\frown [X_w]^T)\\
        &=& \rho_*(\frakS_u \frown \iota_{\varsigma,*}[X_w]^T)
%                                    This was missing   ^
        \ =\ \rho_*(\frakS_u \frown [X^v\cap X_w]^T)\ =\ c^w_{u,v}\,,
 \end{eqnarray*}
 By the projection formula~\eqref{eq:projection_formula} and~\eqref{Eq:triple-Int}.
\end{proof}
%%%%%%%%%%%%%%%%%%%%%%%%%%%%%%%%%%%%%%%%%%%%%%%%%%%%%%%%%%%%%%%%%%%%%%%%%%%%%%%%% 

%%%%%%%%%%%%%%%%%%%%%%%%%%%%%%%%%%%%%%%%%%%%%%%%%%%%%%%%%%%%%%%%%%%%%%%%%%%%%%%%% 
\begin{remark}\label{R:product}
 The formula for $\iota_{\varsigma}^*(\frakS_u)$ in Theorem~\ref{Th:EC_Schubert} gives an algorithm to compute it.
 First expand $\frakS_u\cdot\frakS_\varsigma$ in the Schubert basis of $H^*_T(\calF)$.
 Restrict the sum to terms of the form $\frakS_{w\varsigma}$ with $w\in W'$, and 
 then replace $\frakS_{w\varsigma}$ by $\frakS_{w}$ to obtain the expression for $\iota_{\varsigma}^*(\frakS_u)$.

 As noted following Proposition~\ref{P:EC_local_restriction}, the map $\iota^*_\varsigma$ is particularly simple
 when expressed in terms of localization; it is essentially restriction.
 Theorem~\ref{Th:EC_Schubert} implies that if we restrict a localized Schubert class $i^*(\frakS_u)$ to 
 $W'\varsigma$, considering it as a class in $H^*_T((\calF')^T)$, then it will be a sum of restrictions of Schubert
 classes $\frakS_w$ for $w\in W'$ with coefficients the Schubert structure constants
 $c^{w\varsigma}_{u,\varsigma}$.

 Similarly, if we have a Schubert class $\frakS_u$ expressed as a sum of tensors $f\otimes g$ in the ring
 $S\otimes_{S^W}\! S$, then its pullback $\iota^*_\varsigma(\frakS_u)$ to
 $H^*_T(\calF')=S\otimes_{S^{W'}}\! S$ is the same sum, but where tensors $f\otimes g$  are replaced by 
 $f\otimes(\varsigma.g)$.
 Expanding this in the basis of Schubert classes in $S\otimes_{S^{W'}}\! S$, it will have coefficients the Schubert
 structure constants  $c^{w\varsigma}_{u,\varsigma}$.

 In Section~\ref{S:Ex} we illustrate these interactions between the three formulas for $\iota^*_\varsigma$. 
\end{remark}
%%%%%%%%%%%%%%%%%%%%%%%%%%%%%%%%%%%%%%%%%%%%%%%%%%%%%%%%%%%%%%%%%%%%%%%%%%%%%%%%% 

%%%%%%%%%%%%%%%%%%%%%%%%%%%%%%%%%%%%%%%%%%%%%%%%%%%%%%%%%%%%%%%%%%%%%%%%%%%%%%%%% 

%%%%%%%%%%%%%%%%%%%%%%%%%%%%%%%%%%%%%%%%%%%%%%%%%%%%%%%%%%%%%%%%%%%%%%%%%%%%%%%%% 
% 
\section{The pattern map in equivariant $K$-theory}\label{S:K-theory}

We give three formulas for the pullback 
$\iota_\varsigma\colon K^0_T(\calF)\to K^0_T(\calF')$ of the pattern map in equivariant $K$-theory.
The Borel presentations for $K^0_T(\calF)$ and $K^0_T(\calF')$ are nearly identical
\[
   K^0_T(\calF)\ =\ R(T)\otimes_{R(T)^W}\! R(T)
      \qquad\mbox{and}\qquad
  K^0_T(\calF')\ =\ R(T)\otimes_{R(T)^{W'}} R(T)\,.
\]

%%%%%%%%%%%%%%%%%%%%%%%%%%%%%%%%%%%%%%%%%%%%%%%%%%%%%%%%%%%%%%%%%%%%%%%%%%%%%%%%% 
\begin{theorem} 
  Let $\varsigma$ be the minimal length representative of the coset $W'\varsigma$ of $W'$ in $W$ and
  $\iota_\varsigma\colon\calF'\to\calF$ be the corresponding section of the pattern map.
  The functorial map $\iota^*_\varsigma$ on equivariant $K$-theory is induced by the map
  $f\otimes g\mapsto f\otimes \varsigma.g$, where $W$ acts on $R(T)$ through its right action on 
  $\Xi(T)~\eqref{Eq:rightAction}$.
\end{theorem}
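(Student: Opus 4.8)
The plan is to mimic the proof of Theorem~\ref{Th:Borel_Coh} essentially verbatim, since the Borel presentation of $K^0_T(\calF)$ is built from exactly the same two ingredients: the pullback $\rho^*$ from a point, giving the left copy of $R(T)$, and the subring $\gamma(R(T))$ generated by the classes $[\calL_\lambda]$ of equivariant line bundles, giving the right copy. First I would observe that since $\rho\circ\iota_\varsigma=\rho$ (both are the structure map to a point), functoriality of pullback gives $\iota^*_\varsigma\circ\rho^* = \rho^*$, so $\iota^*_\varsigma(f\otimes 1)=f\otimes 1$; this pins down the behavior on the left factor.

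Next I would handle the right factor. The right copy of $R(T)$ in the McLeod presentation of $K^0_T(\calF)$ is generated as a ring by the classes $[\calL_\lambda]$ for $\lambda\in\Xi(T)$, via $\gamma\colon\lambda\mapsto[\calL_\lambda]$. Since pullback of vector bundles is a ring homomorphism, it suffices to compute $\iota^*_\varsigma([\calL_\lambda])$. By Proposition~\ref{P:pullbackLineBunde}, $\iota^*_\varsigma(\calL_\lambda)=\calL_{\varsigma.\lambda}$, hence $\iota^*_\varsigma([\calL_\lambda]) = [\calL_{\varsigma.\lambda}] = \gamma(\varsigma.\lambda)$ in $K^0_T(\calF')$. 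Because $\gamma$ is a ring map and the right action of $\varsigma$ on $R(T)=\Z[\Xi(T)]$ is the one induced additively on characters by \eqref{Eq:rightAction}, this extends from the generators $\lambda$ to all of $R(T)$: $\iota^*_\varsigma(1\otimes g)=1\otimes(\varsigma.g)$ for every $g\in R(T)$.

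Finally, combining the two factors using multiplicativity of $\iota^*_\varsigma$ and the fact that $f\otimes g = (f\otimes 1)\cdot(1\otimes g)$ in $R(T)\otimes_{R(T)^W}R(T)\simeq K^0_T(\calF)$, I get $\iota^*_\varsigma(f\otimes g)=(f\otimes 1)\cdot(1\otimes\varsigma.g)=f\otimes(\varsigma.g)$, which is the claimed formula. One should check that this assignment is well defined on the relative tensor product, i.e.\ that it descends over $R(T)^W$; this follows because $\varsigma$-translation of $W'$-invariants lands in $R(T)^{W'}$ (since $\varsigma$ is a minimal coset representative, $\varsigma W'\varsigma^{-1}$ need not be $W'$, but $R(T)^W\subseteq R(T)^{W'}$ after translation is exactly the compatibility already used implicitly in the cohomology case), so the tensor relation is respected.

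I do not expect any genuine obstacle here: the only mild subtlety, as in Theorem~\ref{Th:Borel_Coh}, is confirming that $f\otimes g\mapsto f\otimes\varsigma.g$ is well defined with respect to the balancing over the invariant subring, and this is handled the same way for $K$-theory as for cohomology, with $R(T)$, $R(T)^W$, $R(T)^{W'}$ in place of $S$, $S^W$, $S^{W'}$. The proof is therefore a routine transcription of the cohomological argument, replacing equivariant Chern classes by $K$-theory classes of line bundles and the symmetric algebra by the representation ring.
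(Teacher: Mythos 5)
Your argument is correct and is essentially the paper's own (implicit) proof: the $K$-theory statement is established exactly as Theorem~\ref{Th:Borel_Coh}, using $\iota^*_\varsigma\circ\rho^*=\rho^*$ for the left factor and Proposition~\ref{P:pullbackLineBunde}, i.e.\ $\iota^*_\varsigma[\calL_\lambda]=[\calL_{\varsigma.\lambda}]$, on the multiplicative generators $[\calL_\lambda]$ of the right factor, then extending by the ring-homomorphism property. One small phrasing to tidy: the well-definedness over the invariant subring rests on the fact that elements of $R(T)^W$ are fixed by $\varsigma$ and lie in $R(T)^{W'}$, not on $\varsigma$ carrying $W'$-invariants to $W'$-invariants (which can fail); with that correction your check is exactly right.
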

%%%%%%%%%%%%%%%%%%%%%%%%%%%%%%%%%%%%%%%%%%%%%%%%%%%%%%%%%%%%%%%%%%%%%%%%%%%%%%%%% 

%Let $\iota_\varsigma\colon (\calF')^T \to \calF^T$ be the restriction of a section of the pattern map to the
%$T$-fixed points, it sends $W'\ni w\mapsto w\varsigma\in W$.
As with equivariant cohomology, the simplest formula, both in terms of statement and proof, is for localized
classes. 

%%%%%%%%%%%%%%%%%%%%%%%%%%%%%%%%%%%%%%%%%%%%%%%%%%%%%%%%%%%%%%%%%%%%%%%%%%%%%%%%% 
\begin{proposition}
  Let $\phi\in K^0_T(\calF^T)=\Maps(W,R(T))$.
  For $w\in W'$, we have $\iota^*_\varsigma(\phi)(w)=\phi(w\varsigma)$.  
\end{proposition}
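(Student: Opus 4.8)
The plan is to mirror exactly the proof of Proposition~\ref{P:EC_local_restriction}, since the statement for equivariant $K$-theory is the verbatim analogue of the cohomological one, and all the structural input is already in place. First I would recall that the restriction $\iota_\varsigma\colon(\calF')^T\to\calF^T$ of the section to $T$-fixed points is the map sending $w.B'$ to $w\varsigma.B$ for $w\in W'$; this is noted just before the statement and follows from the fact that $\iota_\varsigma$ is $G'$-equivariant and sends $e.B'$ to $\varsigma.B$. In particular $\iota_\varsigma$ identifies each fixed point $w.B'$ of $\calF'$ with the fixed point $w\varsigma.B$ of $\calF$, and the only morphism between two points is an isomorphism, so the induced pullback on $K^0_T$ of a point is the identity on $R(T)$.

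Next I would unwind the identifications $K^0_T(\calF^T)=\bigoplus_{w\in W}R(T)=\Maps(W,R(T))$ and $K^0_T((\calF')^T)=\bigoplus_{w\in W'}R(T)=\Maps(W',R(T))$, under which a class $\phi$ has value $\phi(u)=i_u^*(\phi)$ at $u$. The map $\iota_\varsigma^*\colon\Maps(W,R(T))\to\Maps(W',R(T))$ then factors through these fixed-point inclusions: for $w\in W'$ we have $i_w^*(\iota_\varsigma^*(\phi))=(\iota_\varsigma\circ i_w)^*(\phi)=i_{w\varsigma}^*(\phi)=\phi(w\varsigma)$, where the middle equality uses $\iota_\varsigma\circ i_w=i_{w\varsigma}$ as maps $\pt\to\calF$. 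Hence $\iota_\varsigma^*(\phi)(w)=\phi(w\varsigma)$, which is the claim.

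The only points requiring a word of care are functoriality of $K$-theoretic pullback along the composition $\pt\to(\calF')^T\to\calF^T\hookrightarrow\calF$ (so that pulling back to the fixed point $w.B'$ and then applying $\iota_\varsigma^*$ agrees with pulling back directly to $w\varsigma.B$), and the observation that an isomorphism of $T$-varieties $\pt\xrightarrow{\sim}w\varsigma.B$ induces the identity on $K^0_T(\pt)=R(T)$. Both are formal. I expect no genuine obstacle: the substantive geometric content—that $\iota_\varsigma$ is a section of the pattern map restricting to $w.B'\mapsto w\varsigma.B$ on fixed points, and that it is a $T$-equivariant isomorphism onto its image—has already been established in Section~\ref{S:pattern}, and the rest is bookkeeping with the standard identifications of $K^0_T$ of a disjoint union of fixed points. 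So the proof is essentially one line: apply functoriality and the triviality of pullback between points.

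\begin{proof}
 The restriction of the section $\iota_\varsigma$ to $T$-fixed points sends $w.B'\in(\calF')^T$ to
 $w\varsigma.B\in\calF^T$, so $\iota_\varsigma\circ i_w=i_{w\varsigma}$ as maps $\pt\to\calF$.
 Since the only morphism between two points is an isomorphism, pullback along such a morphism is the
 identity on $K^0_T(\pt)=R(T)$.
 Under the identifications $K^0_T(\calF^T)=\Maps(W,R(T))$ and $K^0_T((\calF')^T)=\Maps(W',R(T))$, the value of
 a class at $u$ is its pullback $i_u^*$.
 Hence for $\phi\in\Maps(W,R(T))$ and $w\in W'$, functoriality gives
\[
   \iota^*_\varsigma(\phi)(w)\ =\ i^*_w(\iota^*_\varsigma(\phi))\ =\ (\iota_\varsigma\circ i_w)^*(\phi)\ =\
   i^*_{w\varsigma}(\phi)\ =\ \phi(w\varsigma)\,.\qedhere
\]
\end{proof}
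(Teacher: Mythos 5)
Your proof is correct and follows exactly the route the paper intends: the paper gives no separate argument for the $K$-theory case, relying on the same observation used for Proposition~\ref{P:EC_local_restriction}, namely that $\iota_\varsigma$ restricted to fixed points sends $w.B'$ to $w\varsigma.B$ and that pullback along a map between points is the identity on $R(T)$. Your explicit chain $i^*_w(\iota^*_\varsigma(\phi))=(\iota_\varsigma\circ i_w)^*(\phi)=i^*_{w\varsigma}(\phi)=\phi(w\varsigma)$ is precisely this argument spelled out.
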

%%%%%%%%%%%%%%%%%%%%%%%%%%%%%%%%%%%%%%%%%%%%%%%%%%%%%%%%%%%%%%%%%%%%%%%%%%%%%%%%% 

We also compute the pattern map in the Schubert basis.

%%%%%%%%%%%%%%%%%%%%%%%%%%%%%%%%%%%%%%%%%%%%%%%%%%%%%%%%%%%%%%%%%%%%%%%%%%%%%%%%% 
\begin{theorem}
 Let $\varsigma$ be the minimal length representative of the coset $W'\varsigma$ of\/ $W'$ in $W$ and 
 $\iota_\varsigma\colon\calF'\hookrightarrow\calF$ be the corresponding section of the pattern map.
 For $u\in W$, we have 
\[
   \iota^*_\varsigma([\calO^u])\ =\ \sum_{w\in W'} b^{w\varsigma}_{u,\varsigma}\, [\calO^w]\,.
\]
\end{theorem}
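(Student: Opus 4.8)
The plan is to mimic the proof of Theorem~\ref{Th:EC_Schubert} (the cohomology case) in equivariant $K$-theory, replacing each cohomological ingredient by its $K$-theoretic counterpart: the module pairing $\langle\cdot,\cdot\rangle$ in place of $\rho_*(\,\cdot\frown\cdot\,)$, the dual basis $\{[\calI_w]\}$ in place of the Schubert cycles $\{[X_w]^T\}$, the structure sheaf $[\calO^\varsigma]=[\calO_{X^\varsigma}]$ in place of $\frakS_\varsigma$, and the triple-product formula~\eqref{eq:K-SSC} for $b^w_{u,v}$ in place of~\eqref{Eq:triple-Int} for $c^w_{u,v}$. First I would note that $\{[\calO^w]\mid w\in W'\}$ is an $R(T)$-basis of $K^0_T(\calF')$, so there are unique coefficients $d^w_u\in R(T)$ with $\iota^*_\varsigma([\calO^u])=\sum_{w\in W'}d^w_u\,[\calO^w]$, and the task is to show $d^w_u=b^{w\varsigma}_{u,\varsigma}$.

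Next I would extract $d^w_u$ by pairing against the basis $\{[\calI_w]\mid w\in W'\}$ dual to $\{[\calO^w]\}$, getting $d^w_u=\langle\iota^*_\varsigma([\calO^u]),[\calI_w]\rangle=\rho_*\bigl(\iota^*_\varsigma([\calO^u])\cdot[\calI_w]\bigr)$. Applying the projection formula along $\iota_\varsigma\colon\calF'\to\calF$, namely $\iota_{\varsigma,*}(\iota^*_\varsigma(\xi)\cdot\zeta)=\xi\cdot\iota_{\varsigma,*}(\zeta)$, together with $\rho_*\circ\iota_{\varsigma,*}=\rho_*$, yields $d^w_u=\rho_*\bigl([\calO^u]\cdot\iota_{\varsigma,*}[\calI_w]\bigr)$. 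By the third line of Corollary~\ref{C:pushforward_formulae}, $\iota_{\varsigma,*}[\calI_w]=[\calO_{X^\varsigma}]\cdot[\calI_{w\varsigma}]$, whence $d^w_u=\rho_*\bigl([\calO_{X^u}]\cdot[\calO_{X^\varsigma}]\cdot[\calI_{w\varsigma}]\bigr)$, which is exactly $b^{w\varsigma}_{u,\varsigma}$ by~\eqref{eq:K-SSC}. As in the cohomology case, the decomposition coefficients are genuine Schubert structure constants, hence positive in the sense of Anderson, Griffeth, and Miller~\cite{AGM}.

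The steps requiring a little care --- though none is a real obstacle --- are the following. One must invoke the projection formula in equivariant $K$-theory; this holds for any morphism of projective $T$-varieties since derived pushforward commutes with tensoring by the pullback of a perfect complex, and on the smooth $\calF$ every class in $K^0_T(\calF)$ is represented by one. One must also use the isomorphism $K^0_T\simeq K^T_0$ for the smooth varieties $\calF$ and $\calF'$, so that the pairing, the dual basis $\{[\calI_w]\}$ on $\calF'$, and the product $\iota^*_\varsigma([\calO^u])\cdot[\calI_w]$ all make sense and are compatible with $\iota_{\varsigma,*}$ and $\rho_*$. Finally, although $\iota_\varsigma$ factors through the possibly singular fixed-point locus $\calF^\eta$, it is still a morphism of projective $T$-varieties, so $\iota_{\varsigma,*}$ and the projection formula apply verbatim; the only input beyond this formal bookkeeping is Corollary~\ref{C:pushforward_formulae}, established earlier via the refined Billey--Braden theorem and Brion's lemma.
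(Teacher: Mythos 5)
Your proposal is correct and follows essentially the same route as the paper's proof: expand $\iota^*_\varsigma([\calO_{X^u}])$ in the basis of Schubert structure sheaves, extract the coefficients by pairing with the dual basis $\{[\calI_w]\}$, apply the projection formula and the pushforward computation $\iota_{\varsigma,*}[\calI_w]=[\calO_{X^\varsigma}]\cdot[\calI_{w\varsigma}]$ from Corollary~\ref{C:pushforward_formulae}, and conclude via~\eqref{eq:K-SSC}. Your additional remarks on why the $K$-theoretic projection formula and the identification $K^0_T\simeq K^T_0$ apply are sound and merely make explicit what the paper leaves implicit.
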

%%%%%%%%%%%%%%%%%%%%%%%%%%%%%%%%%%%%%%%%%%%%%%%%%%%%%%%%%%%%%%%%%%%%%%%%%%%%%%%%% 

%As with equivariant cohomology, t
These decompositions coefficients expressing the pullback in terms of the Schubert
basis are positive in the sense of Anderson, Griffeth and Miller~\cite{AGM}.

%%%%%%%%%%%%%%%%%%%%%%%%%%%%%%%%%%%%%%%%%%%%%%%%%%%%%%%%%%%%%%%%%%%%%%%%%%%%%%%%% 
\begin{proof}
 Let $u\in W$.
 As Schubert structure sheaves form a basis for the Grothendieck group, there are decomposition coefficients  
 $d^w_u\in R(T)$ defined by the identity
\[
   \iota^*_\varsigma([\calO_{X^u}])\ =\ \sum_{w\in W'} d^w_u\,[\calO_{X^w}]\,.
\]
 Using the pairing on $K$-theory, the pushforward formula, and the computation in
 Corollary~\ref{C:pushforward_formulae}, we have that
 \begin{eqnarray*}
  d^w_u &=& \rho_*(\iota^*_\varsigma([\calO_{X^u}])\cdot [\calI_w])
       \ =\ \rho_*([\calO_{X^u}]\cdot \iota_{\varsigma,*}([\calI_w]))\\
        &=& \rho_*([\calO_{X^u}]\cdot [\calO_{X^\varsigma}]\cdot [\calI_{w\varsigma}])
       \ =\ b^{w\varsigma}_{u,\varsigma}\,.
 \end{eqnarray*}
 The last equality is~\eqref{eq:K-SSC}.
\end{proof}
%%%%%%%%%%%%%%%%%%%%%%%%%%%%%%%%%%%%%%%%%%%%%%%%%%%%%%%%%%%%%%%%%%%%%%%%%%%%%%%%% 

%%%%%%%%%%%%%%%%%%%%%%%%%%%%%%%%%%%%%%%%%%%%%%%%%%%%%%%%%%%%%%%%%%%%%%%%%%%%%%%%% 
% 
\section{Examples}\label{S:Ex}

We discuss two examples that illustrate our results for equivariant cohomology.
The first continues Example~2.10 of~\cite{AS}, illustrating Theorem~\ref{Th:EC_Schubert} and
Remark~\ref{R:product}, while
the second illustrates the interplay between all three formulas for equivariant cohomology,
Proposition~\ref{P:EC_local_restriction}, Theorem~\ref{Th:EC_Schubert}, and Theorem~\ref{Th:Borel_Coh}.
All calculations not done explicitly by hand were carried out in Kaji's Maple 
package\fauxfootnote{Available at {\tt http://www.skaji.org/files/bgg\_equivariant.zip}} 
that was released with~\cite{Kaji}.

%%%%%%%%%%%%%%%%%%%%%%%%%%%%%%%%%%%%%%%%%%%%%%%%%%%%%%%%%%%%%%%%%%%%%%%%%%%%%%%%% 
\begin{example}
 Let $G:=Sp(8,\C)$, the symplectic group of Lie type $C_4$, which is the subgroup of $GL(8,\C)$ 
 preserving the form $\langle x,y\rangle = \sum_{i=1}^4 x_iy_{4+i}-x_{4+i}y_i$.
 Then $g\mapsto \diag(g,(g^T)^{-1})$ embeds $G':=GL(4,\C)$ into $G$ with image the centralizer of the 1-parameter
 subgroup 
\[
   T_\eta\ :=\ 
    \{\diag(t,t,t,t\,,\, t^{-1},t^{-1},t^{-1},t^{-1})
    \mid t\in\C^\times\}\,.
\]
 Both $G$ and $G'$ have the same torus, $T$, which we identify with diagonal $4\times 4$ matrices in $GL(4,\C)$.
 Write $t_1,\dotsc,t_4$ for the standard weights of $T$.

 The Weyl group of $G$ is the group of signed permutations.
 These are words $a_1\,a_2\,a_3\,a_4$, where the absolute values 
 $|a_1|,\dotsc,|a_4|$ form a permutation in $\calS_4$, and the identity element is $1\,2\,3\,4$.
 The length function is
\[
   \ell(a_1\,a_2\,a_3\,a_4)\ =\ \#\{i<j\mid a_i>a_j\}\ +\ \sum_{a_i<0} |a_i|\,.
\]
If we use $\overline{a}$ to represent $-a$, then 
\[
  \ell(3\,\overline{1}\,4\,2)\ =\ 4\,,\quad
  \ell(\overline{2}\,\overline{3}\,4\,1)\ =\ 7\,,\quad
  \mbox{and}\quad  \ell(\overline{2}\,\overline{1}\,3\,4)\ =\ 3\,.
\]

The right cosets of $W'=\calS_4$ are all words obtained by permuting the absolute values without changing signs,
and consequently correspond to subsets $P$ of $\{1,\dotsc,4\}$ indicating the positions of the negative entries.
Here are minimal length coset representatives
\[
   \overline{2}\,\overline{1}\,3\,4\,,\ 
   3\, \overline{2}\,4\,\overline{1}\,,\ 
   2\,3\,\overline{1}\,4\,,\  \mbox{ and }\ 
   \overline{3}\,4\,\overline{2}\,\overline{1}
\]
that correspond to subsets $\{1,2\}$, $\{2,4\}$, $\{3\}$, and $\{1,3,4\}$, respectively.

Write \defcolor{$\frakC_u$} for $u\in C_4$ for equivariant Schubert classes in this type $C_4$ flag manifold $\calF$
and  $\frakS_w$ for $w\in S_4$ for equivariant Schubert classes in the type $A_3$ flag manifold $\calF'$.
Set $\varsigma=\overline{2}\,\overline{1}\,3\,4$ and consider $\iota^*_\varsigma(\frakC_{3\,\overline{1}\,4\,2})$. 
Following Remark~\ref{R:product}, we first compute
$\frakC_{3\,\overline{1}\,4\,2}\cdot\frakC_{\overline{2}\,\overline{1}\,3\,4}$.
%
% 2t_1, t_2-t_1,  t_3-t_2,  t_4-t_3 = \alpha_1,\alpha_2,\alpha_3,\alpha_4
%
%  These are the simple roots
%
 \begin{multline*}
  \frakC_{3\,\overline{1}\,4\,2}\cdot\frakC_{\overline{2}\,\overline{1}\,3\,4}
    \ =\    2(t_1^2{+}t_1t_3)\frakC_{\overline{3}\,\overline{1}\,4\,2}
    \ +\ 2(t_1{+}t_3)\frakC_{\overline{1}\,\overline{3}\,4\,2}
    \ +\ 2t_1\frakC_{\overline{4}\,\overline{1}\,3\,2}
    \ +\ 2(t_1{+}t_2{+}t_3)\frakC_{\overline{3}\,\overline{2}\,4\,1}\\
    \ +\ 2(t_1{+}t_2)\frakC_{3\,\overline{2}\,4\,\overline{1}}
     \ +\ \frakC_{\overline{3}\,\overline{2}\,4\,\overline{1}}
     \ +\   2\frakC_{2\,\overline{3}\,4\,\overline{1}}  \\
     \;\ +\;\   2\frakC_{\overline{4}\,\overline{3}\,1\,2}
     \ +\   2\frakC_{\overline{2}\,\overline{3}\,4\,1}
     \ +\   2\frakC_{\overline{1}\,\overline{4}\,3\,2}
     \ +\   2\frakC_{\overline{4}\,\overline{2}\,3\,1}\,.\qquad
 \end{multline*}
 As only the indices of the first four and last four terms have the form $w\varsigma$, we obtain
\begin{multline*}
  \iota_{\varsigma}^*\bigl(\frakC_{3\,\overline{1}\,4\,2}\bigr)\ =\ 
   2(t_1^2+t_1t_3)\frakS_{1342}
    \ +\ 2(t_1+t_3)\frakS_{3142}
    \ +\ 2t_1\frakS_{1432}
    \ +\ 2(t_1+t_2+t_3)\frakS_{2341}\\
    \ +\ 2\frakS_{3412}
    \ +\ 2\frakS_{3241}
    \ +\ 2\frakS_{4132}
    \ +\ 2\frakS_{2431}\,.
\end{multline*}
The last four terms were computed in Example~2.10 of~\cite{AS} as $\iota^*_\varsigma(\frakC_{3\,\overline{1}\,4\,2})$
in cohomology.
\end{example}
%%%%%%%%%%%%%%%%%%%%%%%%%%%%%%%%%%%%%%%%%%%%%%%%%%%%%%%%%%%%%%%%%%%%%%%%%%%%%%%%%

%%%%%%%%%%%%%%%%%%%%%%%%%%%%%%%%%%%%%%%%%%%%%%%%%%%%%%%%%%%%%%%%%%%%%%%%%%%%%%%%% 
\begin{example}
 Consider the localization formulae for $\iota^*_\varsigma$ when $G$ and $G'$ are as in 
 Example~\ref{Ex:six_cosets}.
 Then $G=SL(4,\C)$, $G'=SL(2,\C)\times SL(2,\C)$ with  $W=\calS_4$ and $W'=\calS_2\times\calS_2$ with generators
 the reflections corresponding to the first and third simple roots $\alpha_{12}:=t_2-t_1$ and $\alpha_{34}:=t_4-t_3$.
 There are six cosets $W'\backslash W$ with minimal representatives
 \begin{equation}\label{Eq:minRtReps}
   \{1234\,,\, 1324\,,\, 3124\,,\, 1342\,,\, 3142 \,,\, 3412\}\,.
 \end{equation}
 Figure~\ref{F:2143} shows the localization $i^*\frakS_{2143}$ displayed with the weak order on $\calS_4$,
%%%%%%%%%%%%%%%%%%%%%%%%%%%%%%%%%%%%%%%%%%%%%%%%%%%%%%%%%%%%%%%%%%%%%%%%%%%%%%%%%
\begin{figure}[htb]
%\[
  \begin{picture}(270,260)
   \put(0,0){\includegraphics[height=250pt]{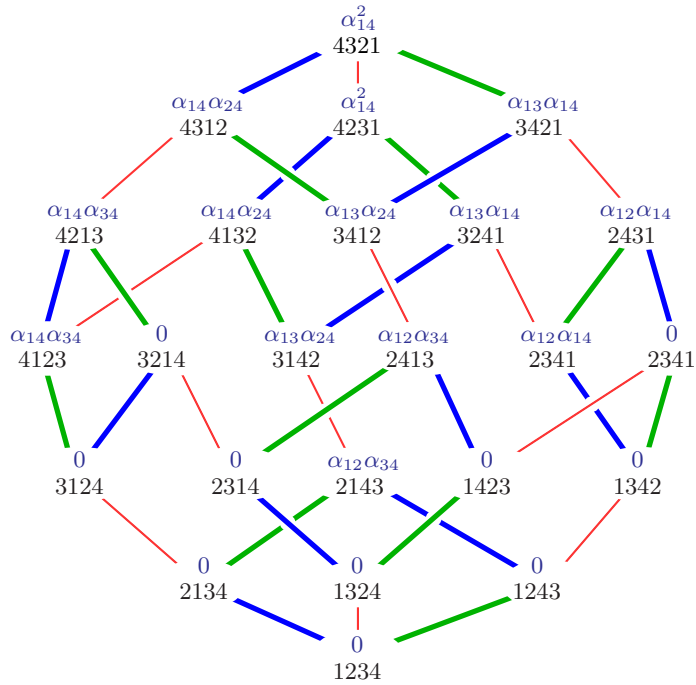}}
   \put(128,237){\scriptsize$4321$} \put(131,247){\defcolor{\scriptsize$\alpha_{14}^2$}}

   \put( 70,206){\scriptsize$4312$} \put( 67,216){\defcolor{\scriptsize$\alpha_{14}\alpha_{24}$}}
   \put(128,206){\scriptsize$4231$} \put(131,216){\defcolor{\scriptsize$\alpha_{14}^2$}}
   \put(197,206){\scriptsize$3421$} \put(194,216){\defcolor{\scriptsize$\alpha_{13}\alpha_{14}$}}

   \put( 23,165){\scriptsize$4213$} \put( 20,175){\defcolor{\scriptsize$\alpha_{14}\alpha_{34}$}}
   \put( 81,165){\scriptsize$4132$} \put( 78,175){\defcolor{\scriptsize$\alpha_{14}\alpha_{24}$}}
   \put(128,165){\scriptsize$3412$} \put(125,175){\defcolor{\scriptsize$\alpha_{13}\alpha_{24}$}}
   \put(175,165){\scriptsize$3241$} \put(172,175){\defcolor{\scriptsize$\alpha_{13}\alpha_{14}$}}
   \put(232,165){\scriptsize$2431$} \put(229,175){\defcolor{\scriptsize$\alpha_{12}\alpha_{14}$}}

   \put(  9,118){\scriptsize$4123$} \put(  6,128){\defcolor{\scriptsize$\alpha_{14}\alpha_{34}$}}
   \put( 54,118){\scriptsize$3214$} \put( 61,128){\defcolor{\scriptsize$0$}}
   \put(105,118){\scriptsize$3142$} \put(102,128){\defcolor{\scriptsize$\alpha_{13}\alpha_{24}$}}
   \put(148,118){\scriptsize$2413$} \put(145,128){\defcolor{\scriptsize$\alpha_{12}\alpha_{34}$}}
   \put(202,118){\scriptsize$2341$} \put(199,128){\defcolor{\scriptsize$\alpha_{12}\alpha_{14}$}}
   \put(247,118){\scriptsize$2341$} \put(254,128){\defcolor{\scriptsize$0$}}

   \put( 23,70){\scriptsize$3124$} \put( 30,80){\defcolor{\scriptsize$0$}}
   \put( 82,70){\scriptsize$2314$} \put( 89,80){\defcolor{\scriptsize$0$}}
   \put(129,70){\scriptsize$2143$} \put(126,80){\defcolor{\scriptsize$\alpha_{12}\alpha_{34}$}}
   \put(177,70){\scriptsize$1423$} \put(184,80){\defcolor{\scriptsize$0$}}
   \put(234,70){\scriptsize$1342$} \put(241,80){\defcolor{\scriptsize$0$}}

   \put( 70,30){\scriptsize$2134$} \put( 77,40){\defcolor{\scriptsize$0$}}
   \put(128,30){\scriptsize$1324$} \put(135,40){\defcolor{\scriptsize$0$}}
   \put(196,30){\scriptsize$1243$} \put(203,40){\defcolor{\scriptsize$0$}}

   \put(128,0){\scriptsize$1234$} \put(135,10){\defcolor{\scriptsize$0$}}
  \end{picture}
%\]
\caption{Localization of $\frakS_{2143}$}\label{F:2143}
\end{figure}
%%%%%%%%%%%%%%%%%%%%%%%%%%%%%%%%%%%%%%%%%%%%%%%%%%%%%%%%%%%%%%%%%%%%%%%%%%%%%%%%% 
which contains the equivariant one-skeleton of $\calF^\eta$.
This is indicated by thick edges with the darker edges
corresponding to the root $\alpha_{12}$ and lighter to $\alpha_{34}$.

Since $\calF'\simeq \P^1\times\P^1$, its equivariant one-skeleton is a diamond.
In Figure~\ref{F:P1P1}, we show five copies of the equivariant one-skeleton, the first labels the $T$-fixed
points, and the remaining four give the localizations $i^*\frakS_w$ for $w=1234,2134,1243, 2143$, in order.
%%%%%%%%%%%%%%%%%%%%%%%%%%%%%%%%%%%%%%%%%%%%%%%%%%%%%%%%%%%%%%%%%%%%%%%%%%%%%%%%%
\begin{figure}[htb]
%\[
   \begin{picture}(410,81)(-20,-15)
    \put(-20,-5){\begin{picture}(70,65)(-12,-8)
      \put(0,0){\includegraphics{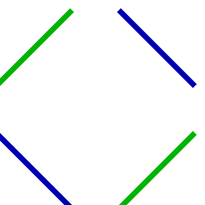}}
                     \put(18,59){\scriptsize{$2143$}}
      \put(-14,25){\scriptsize{$2134$}}        \put(49,25){\scriptsize{$1243$}}
                    \put(18,-9){\scriptsize{$1234$}}
      \end{picture}}
    \put(80,0){\begin{picture}(70,65)(-12,-8)
      \put(0,0){\includegraphics{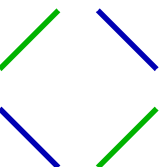}}
                     \put(20,48){\scriptsize{$1$}}
      \put(-8,20){\scriptsize{$1$}}        \put(48,20){\scriptsize{$1$}}
                    \put(20,-8){\scriptsize{$1$}}
                    \put(3,-22){$i^*\frakS_{1234}$}
      \end{picture}}
    \put(160,0){\begin{picture}(70,65)(-12,-8)
      \put(0,0){\includegraphics{pictures/Moment_Graph_A1A1.eps}}
               \put(16,49){\scriptsize{$\alpha_{12}$}}
      \put(-13,21){\scriptsize{$\alpha_{12}$}}        \put(48,20){\scriptsize{$0$}}
                    \put(20,-9){\scriptsize{$0$}}
                    \put(3,-22){$i^*\frakS_{2134}$}
      \end{picture}}
    \put(240,0){\begin{picture}(70,65)(-12,-8)
      \put(0,0){\includegraphics{pictures/Moment_Graph_A1A1.eps}}
                     \put(16,49){\scriptsize{$\alpha_{34}$}}
      \put(-8,20){\scriptsize{$0$}}        \put(40,21){\scriptsize{$\alpha_{34}$}}
                    \put(20,-9){\scriptsize{$0$}}
                    \put(3,-22){$i^*\frakS_{1243}$}
      \end{picture}}
    \put(320,0){\begin{picture}(70,65)(-12,-8)
      \put(0,0){\includegraphics{pictures/Moment_Graph_A1A1.eps}}
          \put(9,49){\scriptsize{$\alpha_{12}\alpha_{34}$}}
      \put(-8,20){\scriptsize{$0$}}        \put(48,20){\scriptsize{$0$}}
                    \put(20,-9){\scriptsize{$0$}}
                    \put(3,-22){$i^*\frakS_{2143}$}
      \end{picture}}
   \end{picture}
%\]
\caption{Localizations of equivariant Schubert classes on $\calF'\simeq\P^1\times\P^1$.}
\label{F:P1P1}
\end{figure}
%%%%%%%%%%%%%%%%%%%%%%%%%%%%%%%%%%%%%%%%%%%%%%%%%%%%%%%%%%%%%%%%%%%%%%%%%%%%%%%%%

Each of the six components of $\calF^\eta$ gives the localization of a pullback
$\iota^*_\varsigma\frakS_{2143}$ for $\varsigma$ a minimal representative~\eqref{Eq:minRtReps}.
The diamond for $\varsigma=1234$ is simply the localization $i^*\frakS_{2143}$ in Figure~\ref{F:P1P1}.
This agrees with Remark~\ref{R:product}, as $\frakS_\varsigma=1$.
Inspecting the diamond for $\varsigma=1324$, we see that 
$\iota^*_{1324}\frakS_{2143}=\frakS_{2143}$, as $2413=2143\cdot 1324$.
We also compute 
 \begin{equation}\label{Eq:1342}
   \iota^*_{1342}\, i^*\frakS_{2143}\ =\ 
    \raisebox{-28pt}{\begin{picture}(80,65)(-25,-8)
      \put(0,0){\includegraphics{pictures/Moment_Graph_A1A1.eps}}
               \put(9,49){\scriptsize{$\alpha_{12}\alpha_{14}$}}
      \put(-20,21){\scriptsize{$\alpha_{12}\alpha_{14}$}}        \put(48,20){\scriptsize{$0$}}
                    \put(20,-9){\scriptsize{$0$}}
      \end{picture}}
   \ =\ \alpha_{14}\, i^*\frakS_{2134}\,.
 \end{equation}
Most interestingly, $\iota^*_{3412}\, i^*\frakS_{2143}$ is
 \begin{multline*}
    \raisebox{-29pt}{\begin{picture}(83,65)(-20,-8)
      \put(0,0){\includegraphics{pictures/Moment_Graph_A1A1.eps}}
               \put(16,49){\scriptsize{$\alpha_{14}^2$}}
      \put(-20,21){\scriptsize{$\alpha_{13}\alpha_{14}$}} \put(35,21){\scriptsize{$\alpha_{14}\alpha_{24}$}}
                    \put(9,-9){\scriptsize{$\alpha_{13}\alpha_{24}$}}
      \end{picture}}
    \ =\ \alpha_{13}\alpha_{24} \cdot 
       \raisebox{-29pt}{\begin{picture}(68,65)(-10,-8)
      \put(0,0){\includegraphics{pictures/Moment_Graph_A1A1.eps}}
                     \put(20,48){\scriptsize{$1$}}
      \put(-8,20){\scriptsize{$1$}}        \put(48,20){\scriptsize{$1$}}
                    \put(20,-8){\scriptsize{$1$}}
      \end{picture}}
   \ +\ \alpha_{13} \cdot
    \raisebox{-29pt}{\begin{picture}(75,65)(-13,-8)
      \put(0,0){\includegraphics{pictures/Moment_Graph_A1A1.eps}}
               \put(15,49){\scriptsize{$\alpha_{12}$}}
      \put(-13,21){\scriptsize{$\alpha_{12}$}}        \put(48,20){\scriptsize{$0$}}
                    \put(20,-9){\scriptsize{$0$}}
      \end{picture}}
   \ +\ \alpha_{24}\cdot 
    \raisebox{-29pt}{\begin{picture}(70,65)(-10,-8)
      \put(0,0){\includegraphics{pictures/Moment_Graph_A1A1.eps}}
                     \put(16,49){\scriptsize{$\alpha_{34}$}}
      \put(-8,20){\scriptsize{$0$}}        \put(42,21){\scriptsize{$\alpha_{34}$}}
                    \put(20,-9){\scriptsize{$0$}}
      \end{picture}}
   \\
  \ +\ 
    \raisebox{-28pt}{\begin{picture}(64,65)(-8,-8)
      \put(0,0){\includegraphics{pictures/Moment_Graph_A1A1.eps}}
          \put(9,49){\scriptsize{$\alpha_{12}\alpha_{34}$}}
      \put(-8,20){\scriptsize{$0$}}        \put(48,20){\scriptsize{$0$}}
                    \put(20,-9){\scriptsize{$0$}}
      \end{picture}}
    \ \ =\ \alpha_{12}\alpha_{24}\cdot i^*\frakS_{1234}\ +\ 
           \alpha_{13}\cdot i^*\frakS_{2134}\ +\ 
           \alpha_{24}\cdot i^*\frakS_{1243}\ +\ i^*\frakS_{2143}\,,
%   \qquad
 \end{multline*}
as $\alpha_{13}\alpha_{24}+\alpha_{13}\alpha_{12}=\alpha_{13}(\alpha_{12}+\alpha_{24})=\alpha_{13}\alpha_{14}$
and similarily, $\alpha_{13}\alpha_{24}+\alpha_{24}\alpha_{34}=\alpha_{14}\alpha_{24}$, and we have
\[
   \alpha_{13}\alpha_{24} + \alpha_{13}\alpha_{12} +\alpha_{24}\alpha_{34}+\alpha_{12}\alpha_{34}
    \ =\ 
   (\alpha_{13}+\alpha_{34})(\alpha_{12}+\alpha_{24})\ =\ \alpha_{14}^2\,.
\]

We compare this computation to the formula of Theorem~\ref{Th:EC_Schubert} as explained in Remark~\ref{R:product}. 
Here are the relevant products,
 \begin{eqnarray*}
%  \frakS_{2143}\cdot\frakS_{1234} &=& \frakS_{2143}\\
  \frakS_{2143}\cdot\frakS_{1324} &=& \frakS_{2413}\ +\ 
                             \Red{\frakS_{4123}+\frakS_{3142}+\frakS_{2341}}\,,\\
%  \frakS_{2143}\cdot\frakS_{3124} &=& \alpha_{14}\frakS_{4123}\ \ +\ \ 
%                             \Red{\alpha_{13}\frakS_{3142}+\frakS_{4132}}\\
  \frakS_{2143}\cdot\frakS_{1342} &=& \alpha_{14}\frakS_{2341}\ \ +\ \ 
                             \Red{\alpha_{24}\frakS_{3142}+\frakS_{3241}}\,,\ \mbox{and}\\
%  \frakS_{2143}\cdot\frakS_{3142} &=& \alpha_{13}\alpha_{24}\frakS_{3142}+ \alpha_{13}\frakS_{3241}
%                                     + \alpha_{24}\frakS_{4132} + \frakS_{4231}\\
  \frakS_{2143}\cdot\frakS_{3412} &=&  \alpha_{13}\alpha_{24}\frakS_{3412}+ \alpha_{13}\frakS_{3421}
                                     + \alpha_{24}\frakS_{4312} + \frakS_{4321}\,.
 \end{eqnarray*}
 Only the first term in $\frakS_{2143}\frakS_{1324}$ has index of the form $w\cdot 1324$, and for it, $w=2143$, so
 this agrees with the observation that $\iota^*_{1324}\frakS_{2143}=\frakS_{2143}$.
 Similarly, only the first term in $\frakS_{2143}\frakS_{1342}$ has index of the form $w\cdot 1342$, and for it
 $w=2134$ and thus we have $\iota^*_{1342}\frakS_{2143}=\alpha_{14}\frakS_{2134}$, which agrees
 with~\eqref{Eq:1342}.
 Finally, all terms in the product $\frakS_{2143}\frakS_{3412}$ contribute to $\iota^*_{3412}$, and they agree with
 our computation using localization.

 We now look at these same pullbacks in the Borel formulation.
 Let us work with $G=GL(4,\C)$ and $G'=GL(2,\C)\times GL(2,\C)$, and so $T\simeq(\C^\times)^4$ are $4\times 4$
 diagonal matrices (this has no effect on the geometry).
 Write $e_i(x)$ for the degree $i$ elementary symmetric polynomial in its arguments.
 Then we have
\[
    H^*_T(\calF)\ =\ \Q[t_1,\dotsc,t_4,z_1,\dotsc,z_4]/\langle e_i(t)-e_i(z)\mid i=1,\dotsc,4\rangle\,.
\]
 Here $t_i$ are characters of the torus and the equivariant Chern classes generate the image of
 $\Q[z_1,\dotsc,z_4]$.
 Similarly, 
\[
    H^*_T(\calF')\ =\ \Q[t_1,\dotsc,t_4,z_1,\dotsc,z_4]/
     \langle e_i(t_1,t_2)-e_i(z_1,z_2)\,,\, e_i(t_3,t_4)-e_i(z_3,z_4)\mid i=1,\dotsc,2\rangle\,.
\]
 In $H^*_T(\calF')$ we have
\[
   \frakS_{1234}\ =\ 1\,,\ 
   \frakS_{2134}\ =\ z_1{-}t_1\,,\ 
   \frakS_{1243}\ =\ z_3{-}t_3\,,\ \mbox{ and }\ 
   \frakS_{2143}\ =\ (z_1{-}t_1)(z_3{-}t_3)\,.
\]
Observe that $z_1-t_1=t_2-z_2$, $z_3-t_3=t_4-z_4$ and $z_1z_2=t_1t_2$.

We have in $H^*_T(\calF)$,
\[
   \frakS_{2143}\ =\ (z_1-t_1)(z_1+z_2+z_3-t_1-t_2-t_3)\ =\ (z_1-t_1)(t_4-z_4)\,.
\]
Then
 \begin{eqnarray*}
  \iota^*_{1324}\frakS_{2134} &=&(z_1-t_1)(t_4-z_4)\ \ =\ \frakS_{2134}\\
  \iota^*_{1342}\frakS_{2134} &=&(z_1-t_1)(t_4-z_2)\ \ =\ (z_1-t_1)t_4 - z_1z_2+z_2t_1
    \ =\ (z_1-t_1)t_4 - t_1t_2+z_2t_1\\ 
     &=& (z_1-t_1)t_4 - t_1(t_2-z_2)  \ =\ (t_4-t_1)(z_1-t_1) =\ \alpha_{14}\frakS_{2134}\\
  \iota^*_{3412}\frakS_{2134} &=& (z_3-t_1)(t_4-z_2)\ =\ 
    (z_3-t_3 \;+\; t_3-t_1)(t_4-t_2\;+\; t_2-z_2)\\ 
     &=& \bigl((t_3-t_1)+(z_3-t_3)\bigr)\bigl((t_4-t_2)+(z_1-t_1))\bigr)\\ 
     &=&\alpha_{13}\alpha_{24}\frakS_{1234}+ \alpha_{13}\frakS_{2134} 
                      + \alpha_{24}\frakS_{1243} + \frakS_{2143}\,,
 \end{eqnarray*}
which agrees with our previous computations.
\end{example}
%%%%%%%%%%%%%%%%%%%%%%%%%%%%%%%%%%%%%%%%%%%%%%%%%%%%%%%%%%%%%%%%%%%%%%%%%%%%%%%%% 

%%%%%%%%%%%%%%%%%%%%%%%%%%%%%%%%%%%%%%%%%%%%%%%%%%%%%%%%%%%%%%%%%%%%%%%%%%%%%%%%% 

%%%%%%%%%%%%%%%%%%%%%%%%%%%%%%%%%%%%%%%%%%%%%%%%%%%%%%%%%%%%%%%%%%%%%%%%%%%%%%%%% 

%%%%%%%%%%%%%%%%%%%%%%%%%%%%%%%%%%%%%%%%%%%%%%%%%%%%%%%%%%%%%%%%%%%%%%%%%%%%%%%%% 
\end{document}